\documentclass[british, 12pt, reqno]{amsart}
\usepackage[T1]{fontenc}
\usepackage[latin9]{inputenc}
\usepackage{amsmath, amsthm, amssymb, stmaryrd}
\usepackage{mathrsfs}
\usepackage{geometry}
\usepackage{mathtools}
\geometry{%verbose,
tmargin=2cm,bmargin=2cm,
lmargin=3.5cm,rmargin=3.5cm}

\setcounter{secnumdepth}{3}
\setcounter{tocdepth}{1}
\usepackage{amsmath}
\usepackage{amsthm}
\usepackage{amssymb}
\usepackage{xcolor}
\usepackage{dsfont}

\usepackage{cancel}
\usepackage{soul}
\usepackage{comment}
\usepackage{color}
\usepackage{amscd}
\usepackage{tabularx}
\usepackage{url}
\usepackage{eurosym}
\usepackage{braket}
\usepackage%[pagebackref=true]
{hyperref}
\usepackage{enumerate}
\usepackage{graphicx}

\makeatletter
\def\widebreve{\mathpalette\wide@breve}
\def\wide@breve#1#2{\sbox\z@{$#1#2$}%
     \mathop{\vbox{\m@th\ialign{##\crcr
\kern0.08em\brevefill#1{0.8\wd\z@}\crcr\noalign{\nointerlineskip}%
                    $\hss#1#2\hss$\crcr}}}\limits}
\def\brevefill#1#2{$\m@th\sbox\tw@{$#1($}%
  \hss\resizebox{#2}{\wd\tw@}{\rotatebox[origin=c]{90}{\upshape(}}\hss$}
\makeatletter

\makeatletter
%%%%%%%%%%%%%%%%%%%%%%%%%%%%%% Textclass specific LaTeX commands.
\numberwithin{equation}{section}
\numberwithin{figure}{section}
\theoremstyle{plain}
\newtheorem{thm}{\protect\theoremname}[section]
\theoremstyle{plain}

\ifx\proof\undefined
\newenvironment{proof}[1][\protect\proofname]{\par
	\normalfont\topsep6\p@\@plus6\p@\relax
	\trivlist
	\itemindent\parindent
	\item[\hskip\labelsep\scshape #1]\ignorespaces
}{%
	\endtrivlist\@endpefalse
}
\providecommand{\proofname}{Proof}
\fi
\theoremstyle{remark}
\newtheorem{rem}[thm]{\protect\remarkname}
\theoremstyle{plain}
\newtheorem{lem}[thm]{\protect\lemmaname}

\makeatother
\DeclareMathOperator{\Bin}{Bin}
\usepackage{babel}
\providecommand{\corollaryname}{Corollary}
\providecommand{\lemmaname}{Lemma}
\providecommand{\remarkname}{Remark}
\providecommand{\theoremname}{Theorem}

\numberwithin{equation}{section}
\numberwithin{figure}{section}
\theoremstyle{plain}

\newtheorem{prop}[thm]{\protect Proposition}

\def\p{{\textcolor{red} {p}}}
\def\1{{\textcolor{red} {1}}}
\def\d1{{\textcolor{red} {d-1}}}

% Blackboard Makros

\def \bN {\mathbb N}
\def \bP {\mathbb P}
\def \bQ {\mathbb Q}
\def \QQ {\mathbb Q}
\def \bR {\mathbb R}

\def \bZ {\mathbb Z}

% Scriptfont Makros

%Romanfont Makros

%Boldfont Makros

% Boldfont symbols

\def \bone {{\boldsymbol 1}}

\def \Gam {{\Gamma}}

%Fraktur Makros

% not allowed to define \fi because it's reserved

%Caligraphic Makros

\def \cI {\mathcal I}

\def \cL {\mathcal L}

\def \cT {\mathcal T}

%Miscellanous Makros

\def \le {\leqslant}
\def \leq {\leqslant}
\def \ge {\geqslant}
\def \geq {\geqslant}

\DeclareMathOperator{\supp}{supp}

\def \d {{\mathrm{d}}}

\def \ds1 {\mathds{1}}

\def \alp {{\alpha}}

\def \gam {{\gamma}}
\def \del {{\delta}}
\def \eps {{\varepsilon}}

\def \lam {{\lambda}}

\newcommand{\NN}{\mathbb N}
\newcommand{\N}{\mathbb N}

\newcommand{\RR}{\mathbb R}
\newcommand{\x}{\mathbf x}

\newcommand{\y}{\mathbf y}
\newcommand{\ZZ}{\mathbb Z}

\pagenumbering{arabic}

\makeatother

\usepackage{babel}
\providecommand{\lemmaname}{Lemma}

\providecommand{\theoremname}{Theorem}
\usepackage{xifthen,ifthen}
\makeatletter
\newcounter{@ToDo}
\newcommand{\todo@helper}[1]{%
	({\color{blue}TODO~\arabic{@ToDo}: {#1\@addpunct{.}}})%
}
\newcommand{\todo}[1]{\stepcounter{@ToDo}%
	\relax\ifmmode\text{\todo@helper{#1}}%
	\else\todo@helper{#1}\fi%
}
\newcounter{@cdo}
\newcommand{\cdo@helper}[1]{%
	({\color{red}CITE~\arabic{@cdo}: {#1\@addpunct{.}}})%
}
\newcommand{\cdo}[1]{\stepcounter{@cdo}%
	\relax\ifmmode\text{\cdo@helper{#1}}%
	\else\cdo@helper{#1}\fi%
}

\def \ds1 {\mathds{1}}

\def \alp {{\alpha}}

\def \gam {{\gamma}}
\def \del {{\delta}}
\def \eps {{\varepsilon}}
\def \epsilon {{\varepsilon}}

\def \lam {{\lambda}}

\DeclarePairedDelimiter{\abs}{\lvert}{\rvert}
\DeclarePairedDelimiter{\norm}{\lVert}{\rVert}

\DeclarePairedDelimiter{\setdelim}{\lbrace}{\rbrace}
\DeclarePairedDelimiter{\parens}{\lparen}{\rparen}
\DeclarePairedDelimiter{\brackets}{\lbrack}{\rbrack}

\begin{document}

\author{Sam Chow \and Manuel Hauke \and Andrew Pollington \and Felipe A.~Ram\'irez}
\address{Mathematics Institute, Zeeman Building, University of Warwick, Coventry CV4 7AL, United Kingdom}
\email{sam.chow@warwick.ac.uk}

\address{Department of Mathematical Sciences, NTNU Trondheim,
Sentralbygg 2, 7034 Trondheim, Norway}
\email{manuel.hauke@gmail.com}

\address{National Science Foundation, Arlington, VA 22230, USA}
\email{adpollin@nsf.gov}

\address{Department of Mathematics and Computer Science, Wesleyan University, Connecticut, United States}
\email{framirez@wesleyan.edu}

\title[General Duffin--Schaeffer-type counterexamples]{General Duffin--Schaeffer-type counterexamples in diophantine approximation}

\subjclass[2020]{11J83}

\keywords{diophantine approximation}

\begin{abstract} 
Duffin and Schaeffer provided a famous counterexample to show that Khintchine's theorem fails without monotonicity assumption. Given any monotonically decreasing approximation function with divergent series, we construct Duffin--Schaeffer-type counterexamples by restricting the denominator. We also extend these constructions to the inhomogeneous setting. Our results resolve some natural questions arising from the works of Erd\H{o}s, Vaaler, and Yu.
\end{abstract}

\maketitle

\section{Introduction}

For $\psi: \bN \to [0,\infty)$, the set of $\psi$-well approximable numbers in $[0,1]$ is
\[
W(\psi) = \{ \alp \in [0,1]: \exists^\infty q \in \bN \quad \| q \alp \| < \psi(q) \},
\]
where $\| x \| = \min \{ |x - a|: a \in \bZ \}$. Khintchine's theorem \cite[Theorem 2.3]{BRV2016}, the cornerstone of metric diophantine approximation, asserts that if $\psi$ is non-increasing then
\begin{equation}
\label{KhintchineConclusion}
\lam(W(\psi)) = \begin{cases}
0, &\text{if } \displaystyle \sum_{q=1}^\infty \psi(q) < \infty \\ \\
1, &\text{if } \displaystyle \sum_{q=1}^\infty \psi(q) = \infty.
\end{cases}
\end{equation}
Here and throughout, we write $\lam$ for Lebesgue measure on $\bR$. Duffin and Schaeffer~\cite{DS1941} famously constructed a counterexample to show that monotonicity cannot be relaxed in general. In the same article, they formulated the Duffin--Schaeffer conjecture on non-monotonic approximation with reduced fractions. Building upon the important steps made by Erd\H{o}s \cite{Erd1970}, Vaaler \cite{Vaa1978} and Pollington--Vaughan~\cite{PV1990}, the conjecture was ingeniously resolved by Koukoulopoulos and Maynard \cite{KM2020}. 

\begin{thm} [Koukoulopoulos and Maynard]
\label{KMthm}
Let $\psi: \bN \to [0, \infty)$ and
\[
W'(\psi) = \{ \alp \in [0,1]: \exists^\infty (q,a) \in \bN \times \bZ \qquad | q \alp  - a| < \psi(q), \quad (a,q) = 1 \}.
\]
Then
\begin{equation}
\label{DSconclusion}
\lam(W'(\psi)) = \begin{cases}
0, &\text{if } \displaystyle \sum_{q=1}^\infty \frac{\varphi(q)\psi(q)}{q} < \infty \\ \\
1, &\text{if } \displaystyle \sum_{q=1}^\infty \frac{\varphi(q)\psi(q)}{q} = \infty.
\end{cases}
\end{equation}
\end{thm}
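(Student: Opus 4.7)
The convergence direction follows from a direct application of the first Borel--Cantelli lemma. Setting
\[
A_q = \{ \alp \in [0,1] : \exists a \in \bZ,\ (a,q)=1,\ |q\alp - a| < \psi(q) \},
\]
subadditivity over the $\varphi(q)$ reduced numerators gives $\lam(A_q) \le 2\varphi(q)\psi(q)/q$, and summability of these measures yields $\lam(W'(\psi)) = \lam(\limsup_q A_q) = 0$.

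The divergence direction is vastly deeper. My plan is to apply the Chung--Erd\H{o}s inequality
\[
\lam\bigl(\limsup_q A_q\bigr) \ge \limsup_{N \to \infty} \frac{\bigl(\sum_{q \le N} \lam(A_q)\bigr)^2}{\sum_{q,r \le N} \lam(A_q \cap A_r)},
\]
which reduces the theorem to the quasi-independence estimate
\[
\sum_{q,r \le N} \lam(A_q \cap A_r) \ll \Bigl(\sum_{q \le N} \lam(A_q)\Bigr)^2
\]
along a suitably divergent sequence of $N$. Positive measure is then upgraded to full measure by Gallagher's zero--one law.

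Computing $\lam(A_q \cap A_r)$ explicitly and performing an Erd\H{o}s--Vaaler-style dyadic decomposition that freezes $\psi$ on the range of interest, one finds the overlap is bounded by the expected $\lam(A_q)\lam(A_r)$ times an anomalous factor determined by the prime factorisation of $\gcd(q,r)$. This factor can be as large as $\prod_{p \mid \gcd(q,r)} p/(p-1)$, creating a genuine surplus whenever many relevant pairs share small prime factors. The central innovation of Koukoulopoulos--Maynard is a combinatorial analysis of \emph{GCD graphs}: one forms a weighted graph on the relevant denominators whose edges record the problematic gcd patterns, then iteratively compresses this graph by passing to induced subgraphs that retain a positive fraction of the total edge weight while enforcing progressively more uniformity in the prime factorisations of the surviving vertices. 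After enough iterations, any residual violation of quasi-independence would force a fixed small prime to divide essentially all remaining denominators with excessive multiplicity, contradicting the trivial bound on $\varphi$.

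The hard part, and where essentially the entire content of the theorem lives, is executing this iterative graph compression. It requires designing a multi-parameter notion of graph ``quality'' that simultaneously tracks vertex degrees, edge weights, and prime multiplicities; proving a structural dichotomy guaranteeing that every low-quality graph admits a compression preserving a large proportion of its weight; and controlling the accumulated loss across the many iterations required. Once the overlap estimate is secured, the remaining ingredients---Chung--Erd\H{o}s, Gallagher's zero--one law, dyadic decomposition in $q$, and the Erd\H{o}s--Vaaler separation into large and small prime factors---are comparatively standard manoeuvres within the area.
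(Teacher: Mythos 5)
This statement is Theorem \ref{KMthm}, which the paper does not prove: it is quoted verbatim from Koukoulopoulos and Maynard \cite{KM2020} and used as a black box (only in Remark \ref{rem:but} and \S\ref{sec:upperdensityproof}). So there is no in-paper argument to compare against; the relevant benchmark is the original proof in \cite{KM2020}.

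Your convergence half is fine: the union bound $\lam(A_q)\le 2\varphi(q)\psi(q)/q$ plus the first Borel--Cantelli lemma is exactly the standard argument. Your divergence half, however, is a roadmap rather than a proof, and the gap sits precisely where the theorem lives. The reduction via Chung--Erd\H{o}s and Gallagher's zero--one law is correct and standard, and your description of the obstruction (an anomalous factor governed by the prime factorisation of $\gcd(q,r)$, which can conspire across many pairs) and of the Koukoulopoulos--Maynard remedy (iterative compression of weighted GCD graphs with a multi-parameter quality function) accurately summarises the architecture of \cite{KM2020}. But the quasi-independence estimate $\sum_{q,r\le N}\lam(A_q\cap A_r)\ll\bigl(\sum_{q\le N}\lam(A_q)\bigr)^2$ is asserted, not established: you would need to define the GCD graphs and the quality function precisely, prove the structural dichotomy that every graph with an anomalously large edge weight admits a compression retaining a definite fraction of that weight while increasing quality, and show the iteration terminates with the losses under control. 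That is a roughly fifty-page argument in the original, and nothing in your proposal substitutes for it. As written, the proposal correctly identifies the strategy but proves only the easy direction.
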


\subsection{Main results}

Our work is principally motivated by the aforementioned results of Erd\H{o}s and Vaaler. The latter asserts that
\[
\psi(q) \ll 1/q \quad \forall q
\: \:
\Longrightarrow
\: \:
\text{\eqref{DSconclusion}}.
\]
This raises the question of whether or not 
\[
\psi(q) \ll 1/q \quad \forall q
\: \:
\Longrightarrow
\: \:
\text{\eqref{KhintchineConclusion}}.
\]
We answer this question negatively,
showing \emph{a fortiori} that if $f: \bN \to [0,\infty)$ is \textit{any} non-increasing function such that 
\begin{equation}
\label{fdiverge}
\lim_{q \to \infty} f(q) = 0, \qquad
\sum_{q=1}^\infty f(q) = \infty,
\end{equation} 
then
\[
\psi(q) \le f(q) \quad \forall q \: \:
\not \Longrightarrow
\: \:
\text{\eqref{KhintchineConclusion}}.
\]

\begin{thm}
\label{thmA}
Let $f: \bN \to [0,\infty)$ be a non-increasing function satisfying \eqref{fdiverge}. Then there exists $S \subseteq \bN$ such that
\[
\sum_{q \in S} f(q) = \infty,
\qquad
\lam(W(f \bone_S)) = 0.
\]
\end{thm}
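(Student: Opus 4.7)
The plan is to reduce $W(f\bone_S)$ to a \emph{reduced} approximation problem and then apply Theorem~\ref{KMthm}. If $\alpha \in W(f\bone_S)$, then there exist infinitely many $(a, q)$ with $q \in S$ and $|q\alpha - a| < f(q)$; writing $(a, q) = (da', dq')$ in lowest terms yields $|q'\alpha - a'| < f(dq')/d$. Hence $W(f\bone_S) \subseteq W'(\tilde\psi)$, where
\[
\tilde\psi(q') := \sup\bigl\{f(dq')/d \,:\, d \ge 1,\ dq' \in S\bigr\}.
\]
By the convergence direction of Theorem~\ref{KMthm}, the task reduces to constructing $S$ with $\sum_{q \in S} f(q) = \infty$ while $\sum_{q'} \varphi(q') \tilde\psi(q')/q' < \infty$.

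The construction proceeds block by block. Let $y_1 < y_2 < \cdots$ grow rapidly and set $Q_k := \prod_{p \le y_k} p$; each $Q_k$ is squarefree with prime support exactly the primes up to $y_k$. For each $k$, choose a set $E_k \subset \bN$ of admissible multipliers satisfying: (a) every $n \in E_k$ has no prime factor exceeding $y_k$ (so $n$ is $y_k$-smooth); and (b) $\sum_{n \in E_k} f(nQ_k) \ge 1$. Define $S_k := \{nQ_k : n \in E_k\}$ and $S := \bigsqcup_k S_k$, arranging the $y_k$'s to grow fast enough that the blocks are pairwise disjoint. Then (b) gives $\sum_{q \in S} f(q) = \infty$ immediately.

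The key estimate uses property (a). If $q'$ has a prime factor $p > y_k$, then any $dq' \in S_k$ would force $p$ to divide $nQ_k$ for some $y_k$-smooth $n$---impossible. Thus $\tilde\psi_k(q') := \sup_{d : dq' \in S_k} f(dq')/d$ vanishes unless every prime of $q'$ is $\le y_k$. For squarefree $q'$ with $k^*(q') := \min\{k : q' \mid Q_k\}$, a direct calculation (using the squarefreeness of $Q_k$) shows that the supremum is realised at $k = k^*(q')$, with $d = Q_k/q'$, yielding $\tilde\psi(q') = q' f(Q_{k^*(q')})/Q_{k^*(q')}$; for $k > k^*(q')$ the ratio $f(Q_k)/Q_k$ is already smaller, and for $k < k^*(q')$ the admissibility constraint forces a zero contribution. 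Using the identity $\sum_{q'|Q_k} \varphi(q') = Q_k$, we compute
\[
\sum_{q'\ \mathrm{sqf}} \frac{\varphi(q')\tilde\psi(q')}{q'}
\;=\; \sum_k \frac{f(Q_k)}{Q_k} \sum_{\substack{q'\ \mathrm{sqf} \\ k^*(q') = k}} \varphi(q')
\;\le\; \sum_k f(Q_k).
\]
Choosing the $y_k$'s so rapidly that $f(Q_k) \le 2^{-k}$---possible since $f(q) \to 0$---makes the right-hand side convergent. A parallel, smaller-order estimate handles non-squarefree $q'$: the relevant $q''_k$ is a proper composite whose contribution is damped by additional factors of order $(y_k^2/Q_k)$, which decay super-exponentially in $k$.

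The main obstacle will be to realise conditions (a) and (b) simultaneously, uniformly over \emph{every} admissible $f$. Condition (a) restricts $E_k$ to $y_k$-smooth integers, while (b) demands enough mass in this sparse set. For slowly decaying $f$ (for instance $f(q) \gg 1/\log q$), one may simply take $E_k = \{1, 2, \ldots, y_k\}$, since every such multiplier is automatically $y_k$-smooth and the partial sum $\sum_{n \le y_k} f(nQ_k) \ge y_k f(y_k Q_k)$ is $\ge 1$. For faster-decaying $f$, one must enlarge $E_k$ beyond $y_k$ while keeping it $y_k$-smooth, invoking quantitative smooth-number estimates (of Dickman type) or introducing a secondary primorial to push $\varphi(q)/q$ down further on the support of $S_k$. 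Calibrating the pair $(y_k, E_k)$ so that the construction succeeds for all non-increasing $f$ obeying~\eqref{fdiverge} is the technical heart of the argument.
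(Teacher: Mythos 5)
Your high-level strategy is legitimate and genuinely different from the paper's: you pass to reduced fractions, dominate $W(f\bone_S)$ by $W'(\tilde\psi)$ with $\tilde\psi(q')=\sup\{f(dq')/d: dq'\in S\}$ (valid up to the null set of rationals, which one must excise to guarantee infinitely many \emph{distinct} reduced pairs), and invoke the convergence half of Theorem~\ref{KMthm}. The paper never appeals to Koukoulopoulos--Maynard; it proves the overlap bound $\lambda(\bigcup_{y}A_{xy})\le 2\sum_y g_{\cI}(y)f(xy)$ directly (Lemma~\ref{lem_overlap_bound}) and controls $g_{\cI}$ with a Hoeffding-type concentration argument. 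Your reduction, if completed, would trade that elementary overlap analysis for a very heavy black box, which is a fair trade in principle.

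The problem is that the step you defer as ``the technical heart'' is not merely technical: conditions (a) and (b) are \emph{incompatible} whenever $f(q)\ll 1/q$, which is exactly the motivating regime of Erd\H{o}s--Vaaler discussed in the introduction. With $Q_k=\prod_{p\le y_k}p$ and every $n\in E_k$ required to be $y_k$-smooth, Mertens' theorem gives
\begin{equation*}
\sum_{n\in E_k} f(nQ_k)\;\le\;\frac{1}{Q_k}\sum_{\substack{n\ge 1\\ p\mid n\Rightarrow p\le y_k}}\frac1n\;=\;\frac{1}{Q_k}\prod_{p\le y_k}\Bigl(1-\frac1p\Bigr)^{-1}\;\sim\;\frac{e^{\gamma}\log y_k}{e^{(1+o(1))y_k}}\;\longrightarrow\;0,
\end{equation*}
so \emph{no} choice of $E_k$ inside the $y_k$-smooth integers can achieve $\sum_{n\in E_k}f(nQ_k)\ge 1$ once $y_k$ is large; Dickman-type counts cannot help because the bound above already sums over all smooth multipliers. (Your fallback $E_k=\{1,\dots,y_k\}$ does work for $f(q)\gg 1/\log q$, but that is not the hard case.) Relaxing (a) to admit non-smooth multipliers destroys the key estimate, since divisors $q'$ with a large prime factor have $\varphi(q')/q'\approx 1$. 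In addition, the non-squarefree contribution to $\sum_{q'}\varphi(q')\tilde\psi(q')/q'$ is only asserted, and your formula for $\tilde\psi(q')$ tacitly assumes $1\in E_k$. It is instructive to compare with the paper's construction: there the mass condition is met by multiplying a structured set $Y$ (one prime from each of many close prime pairs, so that typical pairs of elements share a large gcd) by a very long, sparse arithmetic progression of integers $x\equiv 1\pmod{BP}$ that are \emph{coprime} to $P$ --- precisely the freedom your smoothness constraint forbids, and the reason the paper's overlap bound must be gcd-based rather than $\varphi(q)/q$-based.
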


Though it may at this stage be clear to a handful of experts, we wish to point out that our constructions can be modified to obtain Duffin--Schaeffer-type counterexamples that are monotonic on a support that has full density at infinitely many scales. Writing
\[
\bar \del (S) = \limsup_{N \to \infty} \frac{|S\cap[1,N]|}{N}
\]
for the upper density of $S \subseteq \bN$, we establish the following refinement of Theorem~\ref{thmA}. 

\begin{thm}
\label{thmC}
Let $f: \bN \to [0,\infty)$ be a non-increasing function satisfying \eqref{fdiverge}. Then there exists $\psi: \bN \to [0,\infty)$, non-increasing on its support, such that  
\begin{equation}
\label{eq:thmC}
\psi\leq f,
\qquad
\sum_{q \in \NN} \psi(q) = \infty,
\qquad
\lam(W(\psi)) = 0,
\qquad
\bar \del(\supp\psi) = 1.
\end{equation}
\end{thm}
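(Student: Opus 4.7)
The plan is to interleave Duffin--Schaeffer-type blocks supplied by Theorem~\ref{thmA} with dense ``filler'' intervals on which $\psi$ takes a small positive constant value. The fillers push the upper density of $\supp\psi$ up to $1$, while contributing negligibly both to the total mass $\sum\psi$ and to $\lam(W(\psi))$. The first step, and the main obstacle, will be to extract from the proof of Theorem~\ref{thmA} the following localised form: for every $\eta>0$ and every sufficiently large $F \in \bN$ there exist $G>F$ and a finite set $T \subseteq \bN \cap [F,G]$ with $\sum_{q \in T} f(q) \ge 1$ and
\[
\lam\Bigl( \bigcup_{q \in T} \{\alp \in [0,1] : \|q\alp\| < f(q)\} \Bigr) \le \eta.
\]
Since the construction used to prove Theorem~\ref{thmA} is itself organised block by block along a rapidly growing sequence of scales, this localised statement should already be present with at most cosmetic changes, and I would verify that first.

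Granted the localised version, I would then pick integers $D_1 < E_1 < F_1 < G_1 < D_2 < E_2 < F_2 < G_2 < \cdots$ recursively so that: (a) $E_k \ge k D_k$, forcing $D_k/E_k \to 0$; (b) $F_k > E_k$ is large enough that $E_k f(F_k) \le 2^{-k}$, which is possible because $f(q) \to 0$; (c) the localised statement applied on $[F_k,G_k]$ with $\eta = 2^{-k}$ produces a set $\mathrm{DS}_k$; and (d) $D_{k+1}>G_k$. Define
\[
\psi(q) = \begin{cases} f(F_k), & q \in [D_k,E_k], \\ f(q), & q \in \mathrm{DS}_k, \\ 0, & \text{otherwise.} \end{cases}
\]
The remaining checks are routine: $\psi \le f$ (since $f(F_k) \le f(q)$ for $q \le E_k < F_k$); $\psi$ is non-increasing on its support because within a dense block it is constant, within $\mathrm{DS}_k$ it agrees with the non-increasing $f$, and between adjacent pieces one has $f(F_k) \ge f(q) \ge f(F_{k+1})$ for $q \in \mathrm{DS}_k$, using $F_k \le q \le G_k < F_{k+1}$; $\sum_q \psi(q)$ diverges because $\sum_{q \in \mathrm{DS}_k} f(q) \ge 1$ for every $k$; and $\bar\del(\supp\psi) \ge \lim_k (E_k-D_k)/E_k = 1$. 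The filler height $\delta_k := f(F_k)$ is forced from both sides: it has to dominate $f$ on $\mathrm{DS}_k$ to keep $\psi$ non-increasing on its support, yet it must be small enough that $E_k \delta_k$ is summable in~$k$.

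To see $\lam(W(\psi))=0$, split $W(\psi) = W_\mathrm{d} \cup W_\mathrm{s}$ according to whether the infinitely many witnessing denominators lie in the dense blocks $\bigcup_k [D_k,E_k]$ or in $\bigcup_k \mathrm{DS}_k$. For the dense part, $\sum_{q \in \bigcup_k [D_k,E_k]} 2\psi(q) \le 2 \sum_k E_k f(F_k) \le 2\sum_k 2^{-k} < \infty$, so the convergence Borel--Cantelli lemma gives $\lam(W_\mathrm{d})=0$. For the sparse part, setting $B_k = \bigcup_{q \in \mathrm{DS}_k}\{\alp \in [0,1] : \|q\alp\| < f(q)\}$, the finiteness of each $\mathrm{DS}_k$ forces $W_\mathrm{s} \subseteq \limsup_k B_k$, and $\sum_k \lam(B_k) \le \sum_k 2^{-k} < \infty$ finishes the job.
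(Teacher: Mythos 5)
Your proposal is correct and follows essentially the same route as the paper: the ``localised form'' you defer verifying is precisely Proposition~\ref{counterexample_blocks} (the paper's main counterexample-block proposition, stated for arbitrary $\eps,M>0$ with $S\subset(M,\infty)$ finite), and the paper likewise interleaves these blocks with long filler intervals of summable mass and concludes via the convergence Borel--Cantelli lemma applied separately to the two parts. The only differences are cosmetic: the paper places the fillers after each counterexample block and assigns them the value $\min\{f(q),q^{-2}\}$ rather than a constant, and its blocks carry mass $\sum_{q\in C_j}f(q)\in[1/2,1]$ rather than $\ge 1$, neither of which affects the argument.
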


\begin{rem}
\label{rem:but}
Note that, in Theorem \ref{thmC}, it cannot be guaranteed that $\psi$ will take the form $\bone_S f$ for some $S\subset\NN$ with full upper density. For example, if $f(n) \gg 1/\varphi(n)$, then Theorem \ref{KMthm} precludes the existence of such a set $S$. We explain this in \S  \ref{sec:upperdensityproof}.

Note also that ``upper density'' in Theorem \ref{thmC} cannot be replaced by ``lower density'', since the Duffin--Schaeffer theorem \cite[Theorem III]{DS1941} --- not to be confused with the Duffin--Schaeffer conjecture --- implies that if
\[
\liminf_{N \to \infty} \frac{|S\cap[1,N]|}{N} > 0
\]
then
\[
\sum_{q \in S}\psi(q) = \infty \implies \lam(W(\psi)) = 0.
\]
\end{rem}

\subsection{Inhomogeneous results}

There has been considerable interest around inhomogeneous, non-monotonic approximation in recent years \cite{AR,BHV, CT2024, H25, HR, K25, Ram2017, Yu2021}. For $\psi: \bN \to [0,\infty)$ and $\gam \in \bR$, define
\[
W(\psi, \gam) =
\{ \alp \in [0,1]: \exists^\infty q \in \bN \quad \| q \alp - \gam \| < \psi(q) \}.
\]
The second motivation for our work comes from the aforementioned article of Yu~\cite{Yu2021}. A \emph{Liouville number} is $\gam \in \bR \setminus \bQ$ such that if $w \in \bR$ then
\[
\| q \gam \| < q^{-w}
\]
holds for infinitely many $q \in \bN$. These form a set $\cL$. Yu introduced the set $\cT$ of \textit{tamely Liouville numbers}, which satisfies
\[
\bR \setminus (\bQ \cup \cL)
\subsetneq \cT \subsetneq \bR \setminus \bQ,
\]
and showed in \cite[Theorem 1.3]{Yu2021} that
\begin{equation} 
\label{YuThm}
\psi(q) \ll \frac1{q(\log \log q)^2} \: \: \Longrightarrow \: \: 
\left(
\sum_{q=1}^\infty \psi(q) = \infty
\: \: \Longrightarrow \: \:
\lam(W(\psi,\gam)) = 1 \: \: \forall \gam \in \cT \right).
\end{equation}
This raises the question of whether the tame Liouville assumption can be replaced by the weaker assumption of irrationality. We answer this question negatively,
showing \emph{a fortiori} that if $f: \bN \to [0,\infty)$ is a non-increasing function satisfying \eqref{fdiverge}, then
\[
\psi(q) \le f(q) \quad \forall q
\: \: \not \Longrightarrow \: \: 
\left(
\sum_{q=1}^\infty \psi(q) = \infty
\: \: \Longrightarrow \: \:
\lam(W(\psi,\gam)) = 1 \: \: \forall \gam \in \bR \setminus \bQ \right).
\]

\begin{thm}
\label{thmB}
Let $f: \bN \to [0,\infty)$ be a non-increasing function satisfying
\eqref{fdiverge}.
Then there exists a set $S \subseteq \NN$ and an uncountable set $\Gamma \subseteq \RR$ containing $\QQ$ such that if $\gam \in \Gam$ then
\begin{equation}
\label{eq:thmB}
\sum_{q \in S} f(q) = \infty, \qquad
\lam(W(\bone_S f, \gam)) = 0.
\end{equation}
\end{thm}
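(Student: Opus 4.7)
The plan is to extend the construction from Theorem~\ref{thmA} so that the support $S$ is compatible with an uncountable family of shifts $\gam \in \bR$, including all of $\bQ$. Concretely, I would build $S = \bigsqcup_k S_k$ in disjoint blocks $S_k \subseteq (N_{k-1}, N_k]$ by adapting the strategy of Theorem~\ref{thmA}, imposing the additional structural constraint that every $q \in S_k$ is a multiple of a rapidly growing integer $M_k$ with $M_1 \mid M_2 \mid \cdots$ and $\lcm_k M_k = \infty$ (for instance $M_k = k!$, so that every positive integer eventually divides some $M_k$). The parameters are chosen so that $\sum_{q \in S_k} f(q) \ge 1$, giving $\sum_{q \in S} f(q) = \infty$, and so that $\lam(U_k^0) \le 2^{-k}$, where
\[
U_k^{\gam} := \bigcup_{q \in S_k} \{\alp \in [0,1] : \|q\alp - \gam\| < f(q)\}.
\]
Borel--Cantelli then recovers the homogeneous conclusion $\lam(W(\bone_S f, 0)) = 0$.

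For a rational $\gam = p/N$ and $k \ge k_0(\gam)$ (the least $k$ with $N \mid M_k$), one has $q\gam \in \bZ$ for every $q \in S_k$, so the set $\{\alp : \|q\alp - \gam\| < f(q)\}$ is the rigid translate by $\gam/q$ of its homogeneous counterpart. After the measure-preserving change of variables $\bet = M_k \alp$, bounding $\lam(U_k^\gam)$ reduces to bounding
\[
\lam\biggl(\bigcup_{m=1}^{N_k/M_k} \{\bet \in [0,1] : \|m\bet - \gam\| < f(M_k m)\}\biggr).
\]
The centers of this set at scale $m$ form a $p/N$-shift of the homogeneous Farey centers, and coincidences across different $m, m'$ now require $m \equiv m' \pmod N$. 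A careful accounting then gives $\lam(U_k^\gam) \le C(N) \cdot 2^{-k}$ for a constant depending only on $N$, so that $\lam(W(\bone_S f, \gam)) = 0$ by Borel--Cantelli. This places $\bQ$ inside $\Gam$.

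To produce uncountably many $\gam \in \Gam$, I would build a Cantor-like tree $T$ of rational approximants. At depth $k$, each node is a rational $\gam_k$ with denominator dividing $M_k$; its children are rationals $\gam_{k+1}$ with denominator dividing $M_{k+1}$ and lying in $[\gam_k - \eps_k, \gam_k + \eps_k]$, for parameters $\eps_k > 0$ chosen so that (a) each node has at least two children, requiring $\eps_k M_{k+1} \gtrsim 1$, and (b) for any branch limit $\gam = \lim_k \gam_k$, $|\gam - \gam_k| \lesssim \min_{q \in S_k} f(q)/q$, ensuring $U_k^\gam$ sits inside a bounded dilate of $U_k^{\gam_k}$ and thus $\lam(U_k^\gam) \le 2 C(M_k) \cdot 2^{-k}$. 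By choosing $M_k$ to grow sufficiently rapidly, and by restricting the denominators of chosen $\gam_k$ to stay appropriately bounded so that $C(M_k) 2^{-k}$ remains summable, both constraints can be reconciled simultaneously. The tree has continuum-many infinite branches, and $\Gam := \bQ \cup \{\text{branch limits of }T\}$ is the desired uncountable set.

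The main obstacle is the rational-shift estimate $\lam(U_k^\gam) \le C(N) \lam(U_k^0)$: since the per-interval shifts $\gam/q$ vary with $q$, the inhomogeneous union is not a rigid translate of the homogeneous one, and the overlap structure responsible for the smallness of $\lam(U_k^0)$ is partially destroyed. Controlling the growth of $C(N)$---and tuning the tree parameters $\eps_k, M_k$ so that this growth stays within the Borel--Cantelli summability budget---is the technical heart of the argument.
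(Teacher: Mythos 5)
Your architecture (disjoint counterexample blocks, a divisibility/approximation condition linking the blocks to a growing sequence of denominators, and a Cantor tree of near-rational shifts to get an uncountable $\Gamma\supseteq\QQ$) matches the paper's in outline. But the proof has a genuine gap, and you have named it yourself: the estimate $\lam(U_k^\gamma)\leq C(N)\,\lam(U_k^0)$ for rational shifts is asserted, called the ``technical heart'', and never established. This is not a routine bookkeeping step. The smallness of $\lam(U_k^0)$ in the homogeneous construction comes from an absorption mechanism (Lemma~\ref{lem_overlap_bound}): a centre $a/(xy)$ with $(a,y_e)>1$ is swallowed by $A_{xy'}$ for a conjugate $y'<y$ because $a/(xy)=b/(xy')$ for some integer $b$. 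With a shift $\gamma=p/N$ the centres become $(a+\gamma)/(xy)$, and the absorption condition turns into a congruence involving $N$ that need not hold; your observation that coincidences ``require $m\equiv m'\pmod N$'' already shows that the overlap structure is thinned out by a factor that could a priori be as bad as $N$ times the number of surviving centres, so without an actual argument one cannot rule out $\lam(U_k^\gamma)$ being of the order of $\sum_{q\in S_k}\lam(A_q)\asymp 1$. (Also, a small slip: $q\gamma\in\bZ$ does not make $A_q^\gamma$ equal to $A_q$; e.g.\ $A_2^{1/2}$ has centres $1/4,3/4$. It is only a translate by $\gamma/q$, which varies with $q$, as you note.)

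The paper closes exactly this gap with Lemma~\ref{lem:simple}: if $\norm{b\gamma}\leq b\eps$ then $A_q^\gamma(\eps)\subseteq A_{bq}(2b\eps)$. Since $x\mapsto bx\pmod 1$ is measure preserving, $\lam\bigl(\bigcup_{q\in S}A_{bq}(2bf(q))\bigr)=\lam\bigl(\bigcup_{q\in S}A_q(2bf)\bigr)$, and the dilation Lemma~\ref{lem:dilation} controls the factor $2b$; applying Proposition~\ref{counterexample_blocks} to $2f$ with target measure $\eps/b$ then yields Proposition~\ref{prop:inhomprop}, which is precisely your missing estimate in the quantitative form $C(N)\asymp N$. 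Two consequences are worth noting. First, once this lemma is available, your extra structural constraint that every $q\in S_k$ be a multiple of $M_k$ is unnecessary: the paper's $S_k$ is the unmodified homogeneous block, and all that is required of $\gamma$ is $\norm{b_k\gamma}\leq b_k f(\max S_k)$, which holds for every rational once its denominator divides $b_k$ and for an uncountable Cantor set of ``wildly Liouville'' irrationals. Second, the closeness needed along a branch of the tree is $|\gamma-j/b_k|\leq f(\max S_k)$, not $\min_q f(q)/q$; the correct scale falls out of Lemma~\ref{lem:simple} rather than from a direct perturbation of the intervals. So the plan is salvageable, but the one step you deferred is the one that actually requires an idea, and it is supplied by Lemma~\ref{lem:simple} rather than by a refined overlap count for shifted Farey centres.
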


Far from being tamely Liouville in the sense of Yu, the set $\Gamma$ in Theorem~\ref{thmB} consists of real numbers that are ``wildly'' Liouville in a sense that depends on the function $f$. In fact, to each $f$ we are able to associate a notion of wildly Liouville that is sufficient for the existence of a set $S = S_\gamma$ such that~\eqref{eq:thmB} holds (see Theorem~\ref{thm:generalhowliouville}). As a consequence, we have the following theorem for functions of the form
\begin{equation}
\label{special}
f_k(q)= \frac{1}{q(\log q) (\log\log q) \cdots \log^{(k)}q},
\end{equation}
where $\log^{(k)}$ denotes the $k^{\mathrm{th}}$ iterate of 
$x \mapsto \max \{1, \log x \}$.

\begin{thm}
\label{thm:howliouville}
Let $k\geq 0$ be an integer, and let $\gamma\in\RR$. If
\begin{equation*}
\liminf_{q \to \infty} \lVert q \gamma \rVert \exp^{(k+3)}(q^{7000}) < \infty,
\end{equation*}
where $\exp^{(k)}$ denotes the $k^{\mathrm{th}}$ iterate of $\exp$, then there exists $S \subseteq \N$ such that
\[
\sum_{q \in S}f_k(q) = \infty, \qquad \lambda(W(\bone_Sf_k,\gamma)) =
0.
\]
\end{thm}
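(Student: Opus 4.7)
The plan is to deduce Theorem~\ref{thm:howliouville} from the more general Theorem~\ref{thm:generalhowliouville}, which associates to each admissible $f$ a quantitative \emph{wildly Liouville} condition on $\gam$ sufficient to produce a set $S=S_\gam$ satisfying~\eqref{eq:thmB}. The task therefore reduces to checking that the hypothesis
\[
\liminf_{q\to\infty}\|q\gam\|\exp^{(k+3)}(q^{7000}) < \infty
\]
implies that condition in the special case $f = f_k$.

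To see why it does, I would proceed constructively, in parallel with the proof of Theorem~\ref{thm:generalhowliouville}. First extract a sequence $q_n\to\infty$ with $\|q_n\gam\|\le 1/\exp^{(k+3)}(q_n^{7000})$. For each $n$, assemble a block $B_n=\{mq_n : M_n\le m\le N_n\}$, thinning the $q_n$ so that the $B_n$ are disjoint and sit in rapidly increasing ranges. The window $[M_n,N_n]$ is chosen large enough that $\sum_{q\in B_n} f_k(q)\ge 1$; since $\sum_{q\le N}f_k(q)\sim \log^{(k+1)}(N)$, the substitution $q=mq_n$ shows that taking $N_n q_n$ on the order of $\exp^{(k+1)}(Cq_n)$ suffices, which is far below the budget $\exp^{(k+3)}(q_n^{7000})$ afforded by the Liouville hypothesis. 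For every $q=mq_n\in B_n$ one then has
\[
\|q\gam\|\le m\|q_n\gam\|\le \frac{N_n}{\exp^{(k+3)}(q_n^{7000})},
\]
and the two spare iterated exponentials together with the polynomial exponent $7000$ make this negligible against $f_k(q)$. Consequently, with $S=\bigcup_n B_n$, the inhomogeneous approximations $\|q\alp-\gam\|<f_k(q)$ for $q\in S$ are trapped inside the homogeneous approximations $\|q\alp\|<2f_k(q)$. Since each $B_n$ consists of multiples of the single denominator $q_n$, the associated union of intervals around rationals with denominators in $B_n$ has total length one can bound directly; summability over $n$ then yields $\lam(W(\bone_S f_k,\gam))=0$ via Borel--Cantelli, while $\sum_{q\in S}f_k(q)=\infty$ by construction.

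The main obstacle is the dual bookkeeping on $N_n$: it must be large enough for $\sum_{q\in B_n} f_k(q)$ to reach $1$, yet small enough that $N_n\|q_n\gam\|$ stays well below $f_k(N_n q_n)$ uniformly over the block. Balancing these against the iterated-exponential tower $\exp^{(k+3)}(q_n^{7000})$ is what dictates the precise indices $k+3$ and the polynomial exponent $7000$ appearing in the hypothesis; the remaining work is chiefly manipulating iterated logarithms with enough care to confirm that the hypothesis supplies the slack required by Theorem~\ref{thm:generalhowliouville} specialised to $f=f_k$.
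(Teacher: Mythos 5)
Your opening reduction is the right one --- the theorem is indeed deduced from Theorem~\ref{thm:generalhowliouville} by checking its hypothesis for $f=f_k$ --- but the condition in that theorem is $\liminf_{q\to\infty}L(q)\|q\gam\|<\infty$ with $L(q)=1/f(H(q^2))$, where $H$ is an upper bound for $\max S$ in Proposition~\ref{counterexample_blocks}. Verifying that $\exp^{(k+3)}(q^{7000})$ dominates $L(q)$ therefore requires an explicit bound on $\max S$ in the counterexample-block construction for $f=f_k$. This is the entire content of the paper's Lemma~\ref{lem:maxS} (tracing $K\asymp\del^{-2000}$, the primorial bound $P\le\exp^{(2)}(\del^{-2001})$, and the choices of $x_{\min}$, $B$ and $x_{\max}$ to get $\max S\le\exp^{(k+3)}(\eps^{-3000})$, hence $H(x)\le\exp^{(k+3)}(x^{3000})$ and $1/f_k(H(q^2))<\exp^{(k+3)}(q^{7000})$). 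Your proposal never performs this bookkeeping; it instead substitutes a different construction, and that construction does not work.

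The fatal step is the claim that, for a block $B_n=\{mq_n: M_n\le m\le N_n\}$ of multiples of a single denominator $q_n$ with $\sum_{q\in B_n}f_k(q)\ge 1$, ``the associated union of intervals \dots has total length one can bound directly.'' The opposite is true. After rescaling by the common factor, $\bigcup_{m=M_n}^{N_n}A_{mq_n}$ behaves like a union over the block of \emph{consecutive} integers $[M_n,N_n]$ with $\psi(m)\ll 1/m$, and the Duffin--Schaeffer theorem quoted in Remark~\ref{rem:but} (positive lower density plus divergence) forces such a union to have measure bounded \emph{below}, not measure $<2^{-n}$. So $\limsup_n\bigcup_{q\in B_n}A_q^\gam$ would have positive measure and you would not get $\lam(W(\bone_Sf_k,\gam))=0$. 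The smallness of $\lam(\bigcup_{q\in S'}A_q)$ is exactly what necessitates the multiplicatively structured set $Y(\cI)$ of Lemma~\ref{lem_good_I}, in which generic pairs of elements share a large common divisor; multiples of one fixed $q_n$ do not have this property once $m$ ranges up to $N_n\gg q_n$. A secondary inaccuracy: from $\|q_n\gam\|$ small you do not get $A_q^\gam(f_k(q))\subseteq A_q(2f_k(q))$ with the \emph{same} denominator $q=mq_n$; Lemma~\ref{lem:simple} only gives containment in $A_{q_nq}(2q_nf_k(q))$, i.e.\ the denominator must be multiplied by $b=q_n$, which is why the paper routes the argument through Proposition~\ref{prop:inhomprop} and the dilation Lemma~\ref{lem:dilation}.
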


In particular, Yu's result cannot be extended to real numbers $\gamma$ for which
\[
\liminf_{q \to \infty} \norm{q\gamma}\exp^{(4)}(q^{7000}) < \infty.
\]
We remark that there is an enormous gap between this wild Liouville notion and Yu's tame Liouville notion. While one can most probably sharpen the estimates applied in this article, it does not seem that the gap can be closed simply by optimising the proof of Theorem~\ref{thm:howliouville}.

\

Our constructions have the novel feature of monotonicity on the support. In previous Duffin--Schaeffer-type constructions, the approximating function $\psi$ has the weaker property that $q \mapsto \psi(q)/q$ is monotonic.

\

We conclude our introductory discussion with an open problem, to prove or disprove the following quantitative refinement of \eqref{YuThm}, that
\[
\psi(q) \ll 1/q \: \: \Longrightarrow \: \: 
\left(
\sum_{q=1}^\infty \psi(q) = \infty
\: \: \Longrightarrow \: \:
\lam(W(\psi,\gam)) = 1 \: \: \forall \gam \in \cT \right).
\]

\subsection{Methods}

Let us describe the  proof of Theorem \ref{thmA} in coarse terms. First of all, we require that
\[
\lam \left( \bigcup_{q \in S'} A_q \right) = o \left(
\sum_{q \in S'} \lam(A_q)
\right),
\]
where $S$ is a disjoint union of sets of the form $S'$. Secondly, we require the series $\sum_{q \in S} f(q)$ to diverge very slowly. To these ends, we take
\[
S' = X \cdot Y := \{ xy: x \in X, \: y \in Y \}.
\]
The set $Y$ is highly multiplicatively structured. We use some probability theory to show that a generic pair of elements of $Y$ has a large common divisor. This enables us to deal with the first requirement. The set $X$ is then chosen, consisting of much larger elements, so as to carefully secure the second requirement.

For the inhomogeneous theory, say Theorem \ref{thmB}, our shift $\gam$ is very close to some rational number with denominator $b$, in fact a sequence of these. The key extra ingredient, Lemma~\ref{lem:simple}, asserts that if $\| b \gam \| \le b \eps$ then $A_q^\gam(\eps) \subseteq A_{bq}(2b \eps)$. This essentially reduces the problem to the homogeneous setting.

\subsection*{Notation}

For $\eps>0, \gamma\in\RR$, and $q\in\NN$, denote
\begin{equation*}
    A_q^\gamma(\eps) = \setdelim*{\alpha\in[0,1]: \norm{q\alpha - \gamma} < \eps}. 
\end{equation*}
For $f:\NN\to\RR_+$, we denote $A_q^\gamma(f) = A_q^\gamma(f(q))$, and when it is clear from the context which function we are working with, we write $A_q^\gamma$ for this set. Thus,
\begin{equation*}
    W(f, \gamma) = \limsup_{q\to\infty} A_q^\gamma(f).
\end{equation*}
When there is no superscript $\gamma$, the notation should be interpreted with $\gamma=0$.

We write $\bP = \{ p_1 < p_2 < \ldots \}$ for the set of all primes. For $m \in \bN$, we abbreviate $\bZ_m = \bZ/m\bZ$.

\subsection*{Funding} AP was supported by the National Science Foundation.

\subsection*{Rights}

For the purpose of open access, SC has applied a Creative Commons Attribution (CC-BY) licence to any Author Accepted Manuscript version arising from this submission.

\section{Homogeneous counterexamples}

In this section, we prove Theorems \ref{thmA} and \ref{thmC}.

\subsection{Proof of Theorem \ref{thmA}}

\subsubsection{Construction of counterexample blocks}

The statement of the Theorem \ref{thmA} will follow straightforwardly from the following proposition.

\begin{prop}
\label{counterexample_blocks}
Let $f: \N \to [0,\infty)$ be a non-increasing function satisfying \eqref{fdiverge}.
Then, for any $\varepsilon, M > 0$, there exists a finite set $S \subset \N \cap (M,\infty)$ such that 
\begin{equation}
\label{eq_counter_block}
\sum_{q \in S}\lambda(A_q) \in [1,2],
\qquad \lambda \left(\bigcup_{q \in S} A_q \right) < \varepsilon.\end{equation}
\end{prop}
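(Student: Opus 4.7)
Plan. Following the sketch in Section 1.3, the set $S$ will be built as a product $S = X \cdot Y \subset (M, \infty)$, with $Y$ a finite set having a great deal of multiplicative structure and $X$ providing the scaling needed to control the total $f$-mass. Concretely, I would take $Y = P \cdot R$ where $P = \prod_{p \le T} p$ is a primorial (with threshold $T = T(\eps)$ to be determined) and $R$ is a finite set of integers coprime to $P$. By construction every $y \in Y$ is divisible by $P$, so $\gcd(y_1, y_2) \ge P$ for every pair; a probabilistic argument along the lines suggested by Section 1.3, carrying out second-moment computations over a random $R$, should further show that typical pairs in $Y$ satisfy $\gcd(y_1, y_2) \gg P$.

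With $Y$ fixed, I would take $X = \{x\}$ to be a single scalar, chosen so that $\min S = x \cdot \min Y > M$ and $\sum_{y \in Y} 2f(xy) \in [1,2]$. Such an $x$ exists because $x \mapsto \sum_{y \in Y} 2f(xy)$ is non-increasing, tends to $0$ as $x \to \infty$ (since $f \to 0$ at infinity), and can be made to exceed $2$ at some finite $x$ by first taking $|R|$ sufficiently large (using the divergence of $\sum_q f(q)$). The intermediate value property then picks out the required $x$.

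The critical step is bounding $\lambda(\bigcup_{q \in S} A_q)$. I would use the classical intersection estimate
\[
\lambda(A_{q_1} \cap A_{q_2}) \ge 2 \gcd(q_1, q_2) \cdot \min\left( \frac{f(q_1)}{q_1}, \frac{f(q_2)}{q_2}\right),
\]
which comes from the coincidence of spike centres at rationals with denominator $\gcd(q_1, q_2)$. Combined with the divisor decomposition $A_q = \bigcup_{d \mid q} A_d^*(f(q)/q)$, where $A_d^*(\eps)$ denotes the union of $\eps$-neighbourhoods of primitive rationals with denominator $d$ (of measure $2\varphi(d)\eps$), this yields the covering bound
\[
\lambda\Big(\bigcup_{q \in S} A_q\Big) \le 2 \sum_d \varphi(d) \rho(d), \qquad \rho(d) = \sup\{f(q)/q : q \in S,\; d \mid q\}.
\]
The multiplicative structure of $Y$ confines the divisor set $D = \bigcup_{q \in S}\{d : d \mid q\}$ to products of divisors of $P$ with a coprime factor coming from $R$, and forces $\varphi(q)/q \le \varphi(P)/P$ uniformly on $S$. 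Choosing $T = T(\eps)$ large enough to make $\varphi(P)/P$ very small, together with a careful accounting of the two families of divisors, should drive the right-hand side below $\eps$. The main obstacle, I expect, is precisely this bookkeeping: standard second-moment or Chung--Erd\H{o}s inequalities produce only a \emph{lower} bound on $\lambda(\bigcup A_q)$, so the upper bound must come from a direct covering argument as above, making the dependence on the parameters of $P$ and $R$ delicate.
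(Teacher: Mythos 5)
There is a genuine gap: the mechanism you propose for making $\lambda\bigl(\bigcup_{q\in S}A_q\bigr)$ small --- a universal common factor $P$ (a primorial) with $\varphi(P)/P$ small --- does not produce any saving at all. A common factor is invisible to the measure of the union: since $\|Pr\alpha\|<\eps \iff \|r(P\alpha)\|<\eps$, we have $A_{Pr}(\eps)=T^{-1}(A_r(\eps))$ for the measure-preserving map $T:\alpha\mapsto P\alpha \bmod 1$, so $\lambda\bigl(\bigcup_{r\in R}A_{Pr}\bigr)=\lambda\bigl(\bigcup_{r\in R}A_r(f(Pr))\bigr)$. The same cancellation appears in your own covering bound: the divisors of $xPr$ factor as $eg$ with $e\mid xP$ and $g\mid r$, and $\sum_{e\mid xP}\varphi(e)=xP$ exactly cancels the $xP$ in the denominator of $f(xPr)/(xPr)$, leaving $\sum_d\varphi(d)\rho(d)\approx\sum_{g}\varphi(g)\tilde\rho(g)$ governed solely by the divisor overlaps \emph{within} $R$. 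If the elements of $R$ are pairwise coprime (as a ``random'' $R$ essentially is), there is no overlap, the covering bound degenerates to $\sum_{q\in S}f(q)\approx 1$, and in fact by quasi-independence (Pollington--Vaughan/Chung--Erd\H{o}s) the union genuinely has measure $\gg 1$, not $<\eps$. The note that $\gcd(y_1,y_2)\geq P$ for all pairs, and that $\varphi(P)/P$ is small, is therefore a red herring: all of the work must go into building divisor coincidences among the \emph{varying} parts. This is exactly what the paper's $Y(\cI)=\bigl\{\prod_{i\in\cI}p_{2i-a_i}\bigr\}$ does (every element divides $P(\cI)$, and two elements typically agree in most coordinates), and the quantitative saving comes from Lemma~\ref{lem_overlap_bound}: a spike $a/(xy)$ with $p_{2i}\mid a$ is already contained in $A_{xy'}$ for the smaller ``conjugate'' $y'=yp_{2i-1}/p_{2i}$, so only a proportion $g_\cI(y)=\prod_{p_{2i}\mid y}(1-1/p_{2i})$ of spikes is new; Lemma~\ref{lem_good_I} then uses Hoeffding's inequality to choose $\cI$ so that $g_\cI(y)$ is typically below $\delta$. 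None of this structure is present in your $P\cdot R$ construction as described.

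A secondary, more easily repaired issue: with $X=\{x\}$ a single scalar, the ``intermediate value'' step fails in the discrete setting. The map $x\mapsto\sum_{y\in Y}2f(xy)$ is non-increasing and tends to $0$, but it can jump from above $2$ to below $1$ in a single step (take $f$ locally constant with abrupt drops by a factor of $3$), so no admissible $x$ need exist. The paper circumvents this by taking $X$ to be a long arithmetic progression $x\equiv 1\pmod{BP}$ in $[x_{\min},x_{\max}]$ and accumulating mass in increments $h(x)=\sum_{y}f(xy)<1$; the progression structure is also what allows $t(y)=\sum_{x\in X}f(xy)$ to be compared with the averaged quantity $F(y)/y$, which is essential for transferring Lemma~\ref{lem_good_I} into the final bound.
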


The proof of Proposition 
\ref{counterexample_blocks} will be quite lengthy. Assuming it for now to be given, Theorem \ref{thmA} follows from a standard procedure which we include for completeness.

\begin{proof}
[Proof of Theorem \ref{thmA}, assuming Proposition \ref{counterexample_blocks}]
We construct pairwise disjoint sets $S_1, S_2, \ldots$ recursively using Proposition \ref{counterexample_blocks}. First, we choose $M = 1$ and $\eps = 1/2$ to obtain $S_1$ satisfying \eqref{eq_counter_block} with $S_1$ in place of $S$. Having constructed $S_1,\ldots,S_{k-1}$ in this way, we choose $M_k = \max S_{k-1}$ and apply Proposition \ref{counterexample_blocks} with $\eps = 2^{-k}$. Finally we set $S = \bigcup_{k \in \N}S_k$. We see from
\eqref{eq_counter_block} that
\[\sum_{q \in S}\lambda(A_q) = \sum_{k \in \N}\sum_{q \in S_k}\lambda(A_q) = \infty.\]
On the other hand,
\[
\lambda(W \bone_S) 
\leq \lim_{K \to \infty}\sum_{\ell \geq K}\lambda\left(\bigcup_{q \in S_{\ell}}A_q\right)
\leq \lim_{K \to \infty}\sum_{\ell \geq K} 2^{-\ell} = 0.
\]
\end{proof}

We now proceed in stages to prove
 Proposition \ref{counterexample_blocks}.
For $\cI \subseteq \N$, let 
\[
Y(\cI) = \left\{\prod_{i \in \cI} p_{2i-a_i}: a_i \in \{0,1\}\right\}, \quad P(\cI) = \prod_{i \in \cI} p_{2i}p_{2i-1},
\]
and define the function
\[
g_{\cI}(z) = \prod_{\substack{i \in \cI\\p_{2i}\mid z}} \left(1 - \frac{1}{p_{2i}}\right).
\]
With this notation, we make the following observation.

\begin{lem}
\label{lem_overlap_bound}
Let $x \in \bN$ with $(x,P(\cI)) = 1$, and let $f: \bN \to [0,\infty)$ be non-increasing. Then
\begin{equation}
\label{eq_overlap_bound}
\lambda\left(\bigcup_{y \in Y(\cI)}A_{xy}\right) \leq 2\sum_{y \in Y(\cI)}g_{\cI}(y)f(xy).
\end{equation}
\end{lem}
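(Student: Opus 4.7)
The plan is to show that $\bigcup_{y \in Y(\cI)} A_{xy}$ is already covered by those intervals with a ``primitive'' numerator, namely
\[
\bigcup_{y \in Y(\cI)} A_{xy} \;\subseteq\; \bigcup_{y \in Y(\cI)} \bigcup_{\substack{0 \leq j < xy \\ \gcd(j,\, y_0) = 1}} I_{xy}(j),
\]
where $y_0 := \prod_{i \in J(y)} p_{2i}$ with $J(y) := \{i \in \cI : p_{2i} \mid y\}$, and $I_{xy}(j)$ denotes the interval of radius $f(xy)/(xy)$ centred at $j/(xy)$ (so $A_{xy} = \bigcup_j I_{xy}(j)$). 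Once this inclusion is established, the bound \eqref{eq_overlap_bound} will follow from a direct count: since $y_0 \mid xy$ and $y_0$ is squarefree, the number of $j \in \{0, 1, \ldots, xy - 1\}$ with $\gcd(j, y_0) = 1$ equals $xy\,\varphi(y_0)/y_0 = xy\, g_{\cI}(y)$, and each such interval has length $2f(xy)/(xy)$; summing yields $2\sum_y g_{\cI}(y) f(xy)$.

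The heart of the argument is a swap operation. Suppose $\alpha \in I_{xy}(j)$ and $p_{2i} \mid \gcd(j, y_0)$ for some $i$. I would then replace $(y,j)$ by $(y', j') := (y\,p_{2i-1}/p_{2i},\, j\,p_{2i-1}/p_{2i})$. Both coordinates are integers, and $y' \in Y(\cI)$, corresponding to flipping the choice at index $i$ from $p_{2i}$ to $p_{2i-1}$. Crucially, $j'/(xy') = j/(xy)$, so the two intervals share the same centre, while $xy' < xy$ (since $p_{2i-1} < p_{2i}$); the monotonicity of $f$ then gives $f(xy')/(xy') \geq f(xy)/(xy)$, so $I_{xy}(j) \subseteq I_{xy'}(j')$, and in particular $\alpha \in I_{xy'}(j')$. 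A short calculation, using that $y_0$ is squarefree and that $\gcd(p_{2i-1}, y_0) = 1$, yields $\gcd(j', y'_0) = \gcd(j, y_0)/p_{2i}$. Iterating the swap at most $|J(y)|$ times therefore terminates in a pair $(y^*, j^*)$ with $y^* \in Y(\cI)$, $\alpha \in I_{xy^*}(j^*)$, and $\gcd(j^*, y^*_0) = 1$, establishing the desired inclusion.

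The main obstacle is spotting the swap. The structure of $Y(\cI)$, which selects one of $\{p_{2i-1}, p_{2i}\}$ at each index, is precisely what permits exchanging $p_{2i}$ for $p_{2i-1}$ inside the denominator without shifting the centre of the approximating interval, while only enlarging its radius thanks to the monotonicity of $f$. Once this is identified, the rest of the argument is elementary arithmetic; note in particular that the hypothesis $(x, P(\cI)) = 1$ plays no essential role in this lemma on its own, but is natural in view of the surrounding application, where $x$ will live at a scale entirely disjoint from the primes indexed by $\cI$.
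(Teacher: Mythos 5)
Your proof is correct and follows essentially the same approach as the paper: the central idea in both is the swap $y \mapsto y\,p_{2i-1}/p_{2i}$, which preserves the centre $j/(xy)$ while shrinking the denominator, combined with the monotonicity of $q \mapsto f(q)/q$ and the count of residues coprime to the ``even part'' of $y$. The only difference is organizational --- the paper orders $Y(\cI)$ and peels off $A_{xy}\setminus\bigcup_{y'<y}A_{xy'}$ using a single swap, whereas you iterate the swap to land on a fully primitive interval --- but the content is identical.
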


\begin{proof}
%Enumerating the elements in $Y(I)$ by $y_1 < y_2 < \ldots y_{2^{\lvert I \rvert}}$, 
Clearly,
\[\lambda\left(\bigcup_{y \in Y(\cI)}A_{xy}\right) = \sum_{y \in Y(I)}\lambda\left(A_{xy}\setminus \bigcup_{\substack{y' \in Y(\cI)\\y' < y}}A_{xy'}\right),\]
and we will bound the right-hand side from above term-wise. Observe that
\[
A_{xy} = \bigcup_{a \in \mathbb{Z}_{xy}}\left[\frac{a}{xy} - \frac{f(xy)}{xy},\frac{a}{xy} + \frac{f(xy)}{xy}\right]
\pmod{1}.
\]
Since $f$ is non-increasing, so is $f(y)/y$. Thus, if $\frac{a}{xy} = \frac{b}{xy'}$ for some $b \in \ZZ$ and some $y' < y$, then
$\left[\frac{a}{xy} - \frac{f(xy)}{xy},\frac{a}{xy} + \frac{f(xy)}{xy}\right] \pmod{1} \subseteq A_{xy'}$. Hence,
\begin{align}
\notag
&\sum_{y \in Y(\cI)}\lambda\left(A_{xy}\setminus \bigcup_{\substack{y' \in Y(\cI)\\y' < y}}A_{xy'}\right) \\
\notag
&\leq 2\frac{f(xy)}{xy}\#\left\{a \in \mathbb{Z}_{xy}: \not\exists y'\in Y(\cI), y' < y, b \in \ZZ: \frac{a}{xy} = \frac{b}{xy'}\right\}
\\
\label{reduced_to_comb}
&= 2\frac{f(xy)}{y}\#\left\{a \in \mathbb{Z}_{y}: \not\exists y'\in Y(\cI), y' < y, b \in \ZZ: \frac{a}{y} = \frac{b}{y'}\right\}.
\end{align}

For $y \in Y(\cI)$, we decompose $y = y_ey_o$, where 
\[
y_e = \gcd\left(\prod_{i \in \cI}p_{2i}, y\right), \quad y_o = \gcd\left(\prod_{i \in \cI}p_{2i-1}, y\right).
\]
We claim that if $a \in \ZZ_y$ contributes to the count in \eqref{reduced_to_comb}, then $(a,y_e) = 1$. Indeed, suppose $(a,y_e) > 1$. Then there exists $i \in \cI$ such that $p_{2i} \mid a$ and $p_{2i} \mid y$.
Defining the ``conjugate'' $c_i(y) = y \frac{p_{2i-1}}{p_{2i}} \in Y(\cI)$, we observe that $c_i(y) < y$. Since $y \mid p_{2i} c_i(y) \mid a c_i(y)$, there exists $b \in \ZZ$ such that $\frac{a}{y} = \frac{b}{c_i(y)}$. This confirms the claim. Therefore
\[
\#\left\{a \in \mathbb{Z}_{y}: \not\exists y'\in Y(\cI), y' < y, b \in \ZZ: \frac{a}{y} = \frac{b}{y'}\right\}
\leq y_o\varphi(y_e) = yg_{\cI}(y).
\]
Substituting this into \eqref{reduced_to_comb} completes the proof.
\end{proof}

\subsubsection{Construction of well-behaving $\cI$}

\begin{lem}
\label{lem_good_I}
For $\delta > 0$ and $\cI \subseteq \N$, let 
\[
Y(\cI,\delta) = \{y \in Y: g_{\cI}(y) \geq \delta\}, \quad y_{\min} = \min_{y \in Y(\cI)}y.
\]
Then for any $\delta > 0$, there exists $\cI \subseteq \N$ such that
\begin{equation}
\label{eq_good_I}
\frac{\#Y(\cI,\delta)}
{\sum\limits_{y \in Y(\cI)}\frac{y_{\min}}{y}} < \delta.
\end{equation}
\end{lem}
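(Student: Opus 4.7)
The plan is to exhibit an explicit $\cI$ of the form $\{i \in \bN : A < p_{2i} \leq M\}$ for parameters $A, M$ depending on $\delta$, bound the numerator via a Hoeffding-type concentration estimate, bound the denominator via a multiplicative identity, and conclude by balancing parameters using Mertens-type prime estimates.

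Each $y \in Y(\cI)$ corresponds to a subset $B = \{i \in \cI : a_i = 0\} \subseteq \cI$, and expanding the product over $B$ gives the identity
\[
\sum_{y \in Y(\cI)} \frac{y_{\min}}{y} \;=\; \prod_{i \in \cI}\!\left(1 + \frac{p_{2i-1}}{p_{2i}}\right) \;=\; \prod_{i \in \cI}\!\left(2 - \frac{d_i}{p_{2i}}\right), \qquad d_i := p_{2i} - p_{2i-1}.
\]
Using $\log(1-x) \geq -2x$ for $x \leq 1/2$ yields the lower bound
\[
\sum_{y \in Y(\cI)}\frac{y_{\min}}{y} \;\geq\; 2^{|\cI|} \exp(-\tau), \qquad \tau := \sum_{i \in \cI}\frac{d_i}{p_{2i}}.
\]

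For the numerator, treating $B$ as uniformly random in $2^{\cI}$ makes $\log g_\cI(y) = \sum_{i \in B}\log(1-1/p_{2i})$ a sum of bounded independent summands with mean $\approx -\sigma/2$, where $\sigma := \sum_{i \in \cI}1/p_{2i}$. Hoeffding's inequality (with per-term range at most $2/p_{2i}$) then gives
\[
\#Y(\cI, \delta) \;=\; 2^{|\cI|}\Pr\!\left[\log g_\cI \geq -U\right] \;\leq\; 2^{|\cI|} \exp\!\left(-\frac{(\sigma/2 - U)^2}{2\sigma_2}\right),
\]
where $U := \log(1/\delta)$ and $\sigma_2 := \sum_{i \in \cI}1/p_{2i}^2$, valid as long as $\sigma > 2U$. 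Combining the two bounds reduces the lemma to arranging the single inequality $(\sigma/2 - U)^2 > 2\sigma_2(\tau + U)$.

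For $\cI = \{i : A < p_{2i} \leq M\}$, Mertens-type estimates give $\sigma \asymp \log\log M - \log\log A$, $\tau \asymp \log(M/A)$, and $\sigma_2 \leq C/(A\log A)$. I would take $\log A$ of order $e^{cU}$ and $\log\log M - \log\log A$ a large multiple of $U$: this makes $\sigma$ exceed $2U$ comfortably, while the suppression $\sigma_2 \leq 1/A$ renders $\sigma_2\tau$ minuscule compared to $(\sigma/2 - U)^2$. The principal obstacle is precisely this balancing act: $\tau \asymp \log M$ grows much faster than $\sigma \asymp \log\log M$, so the factor $1/A$ in $\sigma_2$ must absorb the gap between them, forcing $A$ to be doubly-exponentially large in $U$. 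Once the parameters are chosen so that the target inequality holds, Lemma \ref{lem_good_I} follows.
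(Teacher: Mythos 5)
Your argument is correct, and although it lives in the same framework as the paper's proof --- the product identity $\sum_{y\in Y(\cI)} y_{\min}/y=\prod_{i\in\cI}\bigl(1+p_{2i-1}/p_{2i}\bigr)$ for the denominator, plus a uniform random model on $Y(\cI)$ and Hoeffding for the numerator --- the execution of both key estimates is genuinely different. The paper only admits indices $i$ for which the consecutive primes $p_{2i-1},p_{2i}$ lie in a common window $[e^j,e^{j+1})$ and have gap at most $40j$; this small-gap sieving keeps the denominator loss down to $\exp(-50K)$, and the numerator is then controlled by applying Hoeffding to each unweighted binomial count $X_j(y)=\#\{i\in I_j:p_{2i}\mid y\}$ separately and union-bounding over the dyadic ranges, giving $\exp(-100K)$. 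You instead take \emph{all} consecutive pairs with $p_{2i}\in(A,M]$, accept the far larger denominator loss $\exp(-\tau)$ with $\tau\asymp\log(M/A)$, and compensate with a single application of Hoeffding to the weighted sum $\log g_{\cI}=\sum_{i\in B}\log(1-1/p_{2i})$, whose variance proxy $\sigma_2\leq 2/A$ is made minuscule by pushing $A$ up; the balancing does close, since with $\sigma\in[3U,4U]$ one gets $\log M\ll e^{O(U)}\log A$, so $2\sigma_2(\tau+U)\ll e^{O(U)}(\log A)/A$ is beaten by $(\sigma/2-U)^2\geq U^2/4$ already for $A$ singly exponential in $U$ (your ``doubly exponential'' is a harmless overshoot). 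Your route avoids the small-gap selection and the lower bound on $\#I_j$, and it still yields $P(\cI)$ of size $\exp^{(2)}(\delta^{-O(1)})$, so it would feed into Lemma~\ref{lem:maxS} with only cosmetic changes. Two points to make explicit in a full write-up: (i) reduce to small $\delta$, which is legitimate because the left-hand side of \eqref{eq_good_I} is non-increasing in $\delta$ while the right-hand side is increasing; (ii) the lower bound $\sigma\gg\log\log M-\log\log A$ requires observing (via Bertrand's postulate, $p_{2i}<2p_{2i-1}$) that the even-indexed primes carry a positive proportion of the Mertens sum $\sum_{A<p\leq M}1/p$.
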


\begin{proof}
For each $j \in \N$, we define
\[
I_{j} = \left\{i \in \N: p_{2i-1},p_{2i} \in \mathbb{P} \cap [e^j,e^{j+1}), \: p_{2i} - p_{2i-1} \leq 40j \right\},
\]
and for $K \in \N$, we set
\[
\cI_K = \bigcup_{5 \le j \leq K} I_j.
\]
For any fixed $\delta> 0$, we will prove that if $K$ is sufficiently large then $\cI = \cI_K$ satisfies \eqref{eq_good_I}.

To do so, we start by finding a lower bound for the denominator in \eqref{eq_good_I}. Let us abbreviate $Y = Y(\cI)$. Using the condition on the gaps, we see that for $K$ sufficiently large,
\begin{align}
\notag
\sum_{y \in Y} \frac{y_{\min}}{y} &= \prod_{i \in \cI}\left(1 + \frac{p_{2i-1}}{p_{2i}}\right) 
= 2^{\#\cI} \prod_{i \in \cI}\left(1 - \frac{p_{2i} - p_{2i-1}}{2p_{2i}}\right) \\
\notag
&\geq \# Y \prod_{p \leq e^{K+1}}\left(1 - \frac{20 \log p}{p}\right)
\geq \# Y \exp\left(-40 \sum_{p \leq e^{K+1}} \frac{\log p}{p}\right) \\
\label{eq_harmonic_avg}
& \geq \# Y \exp(-50K),
\end{align}
where we used Mertens' first theorem in the last line.

Next, we find an upper bound for $\#Y$.
\iffalse
For this, we define a bijection
\[
\pi: Y(\cI_K) \to \bigtimes_{j \leq K}\mathcal{P}(I_j)
\]
via $\pi_j(y) := \{i \in I_j: p_{2i}\mid y\}, j = 1,\ldots,K$ and $\mathcal{P}(\cdot)$ denotes the power set. Letting
$X_j(y) := \#\pi_j(y)$,
\fi
Denoting
\[
X_j(y) = \# \{ i \in I_j: p_{2i} \mid y \},
\]
we see that for $K$ sufficiently large,
\begin{equation*}
g_{\cI}(y) \leq \prod_{j \leq K}\left(1 - \frac{1}{e^{j+1}}\right)^{X_j(y)}
\leq \exp\left(-\sum_{(\log K)^2 \leq j \leq K} \frac{X_j(y)}{e^{j+1}}\right).
\end{equation*}
If $X_j(y) \geq \frac{e^j}{500j}$ for all $(\log K)^2 \leq j \leq K$, then this gives 
\[
g_{\cI_K}(y) \leq K^{-1/2000} < \delta,
\]
provided that $K$ is sufficiently large. The upshot is that
\begin{equation}
\label{eq_inclusion}
Y(\cI,\delta) \subseteq 
\left\{ y \in Y: 
\exists j \in [(\log K)^2, K]: X_j(y)
\leq \frac{e^j}{500j}\right\}.
\end{equation}
We will now be bound the cardinality of this set, from above.

Choosing the uniform probability measure on $Y$, i.e. $\mathbb{P}(y) = \frac{1}{\lvert Y \rvert}, y \in Y$, we can interpret $(X_j)_{j = 1}^k$ as random variables. We can see immediately that for all $1 \leq j \leq K$, we have 
$X_j \stackrel{d}{=} \Bin(\#I_j,\tfrac{1}{2})$ where $\Bin(n,p)$ denotes the binomial distribution. An application of Hoeffding's inequality thus shows that
\[\mathbb{P}\left[X_j(y) \leq \tfrac{\#I_j}{10}\right] = \frac{\#\left\{y \in Y: X_j(y) \leq \tfrac{\#I_j}{10}\right\}}{\#Y}
\leq \exp\left(-\tfrac{\#I_j}{4}\right).
\]
Next, we bound for $\#I_j$ from below. By the prime number theorem, if $j$ sufficiently large then
\[
\#\{p \in [e^j,e^{j+1}): p \in \mathbb{P} \} \geq \frac{e^j}{5j}.
\]
This implies that the average prime gap in $[e^j,e^{j+1})$ is at most $10j$, and so the average gap arising between the primes $p_{2i}$ in this range is at most $20j$. Thus Markov's inequality implies that, for sufficiently large $j$,
\begin{equation*}
\label{size_Ij}\#I_j \geq \frac{e^j}{50j}.
\end{equation*}

For $K$ sufficiently large, the union bound now yields
\begin{align*}
&\frac{\#\left\{y \in Y: 
\exists j \in [(\log K)^2, K]: X_j(y)
\leq \frac{e^j}{500j}
\right\}}{\#Y} \\
&\leq \sum_{(\log K)^2 \leq j \leq K} \mathbb{P}\left[X_j(y) \leq \frac{\# I_j}{10}\right]
\leq \sum_{(\log K)^2 \leq j \leq K} \exp\left(-\frac{\# I_j}{4}\right)
\\&\leq \sum_{j \geq (\log K)^2} \exp\left(-\frac{e^j}{200}\right) \ll \exp(-100K).
\end{align*}
Combining this with \eqref{eq_harmonic_avg} and \eqref{eq_inclusion} completes the proof.
\end{proof}

\subsubsection{Completion of the proof of Proposition \ref{counterexample_blocks}}
\label{subsec_completion}

We assume, as we may, that the constant $\eps > 0$ is sufficiently small.  By Lemma \ref{lem_good_I}, there exists $\cI$ such that \eqref{eq_good_I} is satisfied, for some $\delta = \delta_{\varepsilon}$ to be specified later. Denote 
\begin{equation}
\label{denote}
Y = Y(\cI), \qquad P = P(\cI) = \prod_{i \in \cI} p_{2i}p_{2i-1},
\end{equation}
as well as
\[
y_{\min} = \min Y, \qquad
y_{\max} =  \max Y.
\]
We construct the set $S$ in the following way.

\begin{enumerate}
\item \label{step1}
We choose $x_{\min}' \geq M$ large enough for $f(x_{\min}'y_{\min})$ to be arbitrary small, even if multiplied by whatever term we will need depending on $Y$. We denote such a quantity by $o_{Y}(1)$.
\item \label{step2}
Next, we choose $B \in \bN$, coprime to $P$, such that $BP + 1 \ge x'_{\min}$.
Then we define 
$
x_{\min} = BP + 1,
$
noting that 
\begin{equation}
\label{control_xmin}
x_{\min}' \leq x_{\min} \leq 2BP.
\end{equation}
\item \label{step3}
We claim that there exists $x_{\max} \in \N$, depending on the quantities defined above, such that
\begin{equation}
\label{const_mass}
\sum_{y \in Y} \sum_{\substack{x \in [x_{\min},x_{\max}]\\x \equiv 1 \pmod {BP}}} f(xy) \in [1,2].
\end{equation}
Indeed, let $h(x) = \sum_{y \in Y}f(xy)$. Then  
\[
\sum_{x=1}^\infty
h(x) \geq \sum_{x \in \N} f(y_{\min}x) = \infty
\]
and, since $h$ inherits monotonicity from $f$, 
\[
\sum_{\substack{x \in \N\\x \equiv 1 \pmod {BP}}}h(x) = \infty.
\]
Since $f(q) \to 0$ as $q \to \infty$, we have 
\[
h(x) = o_Y(1) < 1 \qquad (x \geq x_{\min}).
\]
This confirms that a positive integer $x_{\max}$ satisfying \eqref{const_mass} can be found. We may clearly take $x_{\max} \equiv 1 \pmod{BP}$.
\item We set 
\[
X = \{ x \in [x_{\min},x_{\max}]: x \equiv 1 \pmod {BP}\}
\]
and 
\[
S = X \cdot Y := \{ xy: x \in X, \: y \in Y \}.
\]
\end{enumerate}

Having constructed $S$, we apply Lemma \ref{lem_overlap_bound} to obtain
\[
\lambda\left(\bigcup_{q \in S} A_{q}\right) \leq 
\sum_{x \in X} \lambda \left(\bigcup_{y \in Y} A_{xy} \right)
\leq 2\sum_{x \in X} \sum_{y \in Y} g_{\cI}(y) f(xy)
= 2\sum_{y \in Y} g_{\cI}(y)t(y),
\]
where $t(y) = \sum_{x \in X}f(xy)$. Since $f$ is non-increasing,
\begin{equation*}
\label{eq_AP_sandwich}
\frac{1}{yBP}\sum_{0 \leq k \leq yBP-1}f(xy+k)\leq f(xy) \leq \frac{1}{yBP}\sum_{0 \leq k \leq yBP-1}f(xy-k).
\end{equation*}
Consequently,
\[
\begin{split}
t(y) &\leq f(x_{\min}y) + \frac{1}{yBP} \sum_{\substack{ x_{\min} < x \le x_{\max} \\ x \equiv 1 \pmod{BP}}} \sum_{0 \leq k \leq yBP-1}f(xy-k)\\&= f(x_{\min}y) + \frac{1}{yBP} \sum_{z = yx_{\min}+1}^{yx_{\max}} f(z) 
\end{split}
\]
and
\[
t(y) \geq \frac{1}{yBP} 
\sum_{\substack{ x_{\min} \le x \le x_{\max} \\ x \equiv 1 \pmod{BP}}} 
\sum_{0 \leq k \leq yBP-1} f(xy-k)
= \frac{1}{yBP} \sum_{z = yx_{\min}+1}^{yx_{\max}} f(z).
\]
Thus, writing
\[
F(y) = \frac{1}{BP} \sum_{z = yx_{\min}+1}^{yx_{\max}} f(z),
\]
we conclude that
\begin{equation*}
\label{eq_t=F}
t(y) = \frac{F(y)}{y} + O(f(x_{\min}y)) = \frac{F(y)}{y} + o_{Y}(1).
\end{equation*}
%and note that by monotonicity of $f$, $\frac{F(y)}{y}$ is also monotonically decreasing.
For $x_{\min}'$ sufficiently large, it follows that
\begin{equation*}
\lambda\left(\bigcup_{q \in S}A_{q}\right) \leq 2\left(\sum_{y \in Y}\frac{g_{\cI}(y)}{y}F(y)\right) + \delta.
\end{equation*}

Since $f \ge 0$ is non-increasing, so too is $F(y)/y$. Using the notation 
\[
Y(\cI,\delta) = \left\{y \in Y: g_{\cI}(y) \geq \delta\right\}
\]
from Lemma \ref{lem_good_I}, and the fact that $g_{\cI}(y) \leq 1$, we compute that
\[
\begin{split}
\sum_{y \in Y} \frac{g_{\cI}(y)}{y}F(y) 
&=  \sum_{y \in Y(\cI,\delta)} \frac{g_{\cI}(y)}{y}F(y) + \sum_{y \in Y\setminus Y(\cI,\delta)} \frac{g_{\cI}(y)}{y}F(y)
\\&\leq \frac{F(y_{\min})}{y_{\min}}\sum_{y \in Y(\cI,\delta)}1 + \delta\sum_{y \in Y} \frac{F(y)}{y}
\\&\leq \#Y(\cI,\delta) \frac{F(y_{\min})}{y_{\min}} + 3\delta,
\end{split}
\]
where we used \eqref{const_mass} in the last step. This shows that
\begin{equation}
\label{upper_bound_prop}\lambda\left(\bigcup_{q \in S}A_{q}\right) \leq 2\#Y(\cI,\varepsilon) \frac{F(y_{\min})}{y_{\min}} + 7\delta.
\end{equation}

Next, we define 
\[
G(y) = \frac{1}{BP} \sum_{z = y_{\min}x_{\min}+1}^{yx_{\max}} f(z),
\]
which inherits monotonicity from $f$.
By \eqref{control_xmin}, if $y \in Y$ then
\[
\begin{split} \lvert F(y) - G(y) \rvert &\le \frac{1}{BP} \sum_{z = y_{\min}x_{\min}+1}^{y_{\max}x_{\min}} f(z)
\leq \frac{1}{BP}y_{\max}x_{\min}f(x_{\min}'y_{\min})
\\&\leq 2y_{\max}f(x_{\min}'y_{\min}) = o_{Y}(1).
\end{split}
\]
Thus,
\begin{align*}\sum_{y \in Y} \sum_{x \in X} f(xy) &= \left(\sum_{y \in Y}\frac{F(y)}{y}\right) + o_{Y}(1) 
\\&=\left(\sum_{y \in Y}\frac{G(y) + o_{Y}(1)}{y}\right) + o_{Y}(1) 
\\&= \left(\sum_{y \in Y}\frac{G(y)}{y}\right) + o_{Y}(1).
\end{align*}
Combining this with \eqref{const_mass} and \eqref{upper_bound_prop} yields
\begin{equation}
\label{intermed_step}
\lambda\left(\bigcup_{q \in S}A_{q}\right) \leq 4\frac{\#Y(\cI,\varepsilon) \frac{F(y_{\min})}{y_{\min}}}{\sum_{y \in Y}\frac{G(y)}{y}} + 8\delta.
\end{equation}

%Note that it follows from $f$ being non-negative that $G(y)$ is monotonically increasing as well as $F(y) \leq G(y)$. 
Observe that
\[
\begin{split}\sum_{y \in Y}\frac{G(y)}{y} \geq G(y_{\min}) \sum_{y \in Y}\frac{1}{y}
&\geq F(y_{\min}) \sum_{y \in Y}\frac{1}{y}
\end{split}.
\]
Hence, by Lemma \ref{lem_good_I},
\[
\frac{\#Y(\cI,\varepsilon) \frac{F(y_{\min})}{y_{\min}}}{\sum_{y \in Y}\frac{G(y)}{y}}
\leq \frac{\#Y(\cI,\varepsilon)}{\sum\limits_{y \in Y}\frac{y_{\min}}{y}} < \delta.
\]
Substituting this into \eqref{intermed_step} and choosing $\del = \eps/12$ completes the proof of Proposition \ref{counterexample_blocks},
thereby completing the proof of Theorem \ref{thmA}.

\subsection{Proof of Theorem \ref{thmC}}
\label{sec:upperdensityproof}

Before proving Theorem~\ref{thmC}, we briefly justify the first part of Remark~\ref{rem:but} namely that, in Theorem \ref{thmA}, the function $\psi$ cannot necessarily be taken as $\bone_S f$ for some $S\subset\NN$ with full upper density. Suppose the function $f$ satisfies $f(q) \gg \frac{1}{\varphi(q)}$. Suppose
$\overline\delta(S) = 1$, and let $\eps \in (0, 1/2)$. Let $Q \ge 1$ be such that
\begin{equation*}
\frac{\#([1,Q]\cap\NN)\setminus S}{Q} < \eps. 
\end{equation*}
Then
\begin{align*}
  \sum_{q\in S} \frac{\varphi(q)f(q)}{q}
  &\geq \sum_{q\in S\cap [1,Q]} \frac{\varphi(q)f(q)}{q} \gg \sum_{q\in S\cap [1,Q]} \frac{1}{q}\geq \sum_{\eps Q <q \leq Q} \frac{1}{q} \gg \log(\eps^{-1}). 
\end{align*}
Since $\eps>0$ can be arbitrarily small, this shows that
$\sum_{q\in S} \frac{\varphi(q)f(q)}{q}=\infty$. By Theorem \ref{KMthm}, 
\[
\lambda(W(f\bone_S))=1.
\]

\

\begin{proof}[Proof of Theorem~\ref{thmC}]
The function $\psi:\NN\to\RR_+$ will be supported on pairwise disjoint blocks:
\begin{equation}
\label{BlockSupport}
\supp{\psi} = \bigcup_{j=1}^\infty (C_j \cup D_j).
\end{equation}
The blocks $C_j$ will be sparse, and defined in a way that contributes substantially to $\sum \psi(q)$ without significantly affecting $W(\psi)$. The blocks $D_j$ will be long strings of consecutive integers on which $\psi$ contributes negligibly to $\sum \psi(q)$. These serve to ensure that $\supp{S}$ has full upper density. 

To start the construction, put
$D_0 = \setdelim{1}$, $Q_0=1$, and $\psi(1) = f(1)$. For each $j \geq 1$, we will have $\psi(Q_{j-1}) > 0$, and we choose $M_j > Q_{j-1}$ large enough that $f(M_j) \leq \psi(Q_{j-1})$. By Proposition~\ref{counterexample_blocks}, we may find a finite set
$C_j \subset \NN \cap (M_j,\infty)$ such that
\begin{equation}
\label{eq:fromprop}
\sum_{q \in C_j} f(q)
\in [1/2,1] \quad\textrm{and}\quad \lambda \left(\bigcup_{q \in C_j} A_q \right) 
< 2^{-j}.
\end{equation}
Let $D_j = (\max C_j, Q_j]\cap \NN$, where $Q_j$ is chosen large enough that 
\begin{equation}
\label{eq:density}
\frac{Q_j - \max C_j}{Q_j} \geq 1 - \frac{1}{j}. 
\end{equation}
Define
\begin{equation*}
\label{eq:psi}
\psi(q) =
\begin{cases}
f(q) &\textrm{if } q \in C_j, \textrm{ for some } j\geq 1 \\
\min\setdelim{f(q), q^{-2}}, &\textrm{if } q \in D_j \textrm{ for some } j\geq 1\\
0, &\textrm{otherwise.}
\end{cases}
\end{equation*}

Clearly, we have $\psi(q)\leq f(q)$ for all $q\in\NN$, as well as \eqref{BlockSupport}. Notice that
$\supp\psi$ has upper density $1$ by~(\ref{eq:density}), and that $\psi$ is non-increasing on $\supp\psi$ by construction. Moreover, by~(\ref{eq:fromprop}),
\begin{align*}
\sum_{q=1}^\infty \psi(q)
\geq \sum_{j=1}^\infty \sum_{q\in C_j}\psi(q) 
= \sum_{j=1}^\infty \sum_{q\in C_j}f(q) 
= \infty.
\end{align*}

It remains to show that $\lam(W(\psi)) = 0$. To see this, note from (\ref{eq:fromprop})
and the first Borel--Cantelli lemma that almost no $x$ lie in infinitely many of the sets $\bigcup_{q \in C_j} A_q$. Furthermore, we have
\begin{equation*}
\sum_{j=1}^\infty\sum_{q\in D_j} \psi(q) \leq \sum_{q=1}^\infty q^{-2} < \infty, 
\end{equation*}
so almost no $x$ lie in infinitely many of the sets $A_q\, (q\in D_j)$. Therefore
\[
\lambda(W(\psi))=0.
\]
\end{proof}

\begin{rem}
The proof above can be straightforwardly adapted to the inhomogeneous setup, giving inhomogeneous constructions with upper density $1$ that are non-increasing on the support.
We omit a full description for the sake of conciseness; one only needs to replace the ``counterexample blocks'' $C_i$ by the inhomogeneous counterexample blocks from \S \ref{sec:inhom_c}. 
\end{rem}

\section{Inhomogeneous counterexamples}\label{sec:inhom_c}

In this section, we prove the inhomogeneous Theorems~\ref{thmB} and~\ref{thm:howliouville}. They are based on the following inhomogeneous version of Proposition~\ref{counterexample_blocks}.

\begin{prop}
\label{prop:inhomprop}
Let $f: \bN \to [0,1/2)$ be a non-increasing function satisfying \eqref{fdiverge}. Then, for any $\eps, M > 0$ and any positive integer $b$, there exists a finite set $S\subset \NN \cap (M,\infty)$ such that
\[
\sum_{q\in S} f(q) \in [1/2,1],
\]
and such that
\begin{equation}
\label{eq:smallmass}
\lambda\parens*{\bigcup_{q\in S} A_q^\gamma(f)} < \eps 
\end{equation}
holds for every
$\gamma \in \bigcup_{j\in\ZZ} \brackets*{\frac{j-\delta}{b},
   \frac{j+\delta}{b}}$, where $\delta = b f(\max S)$.
\end{prop}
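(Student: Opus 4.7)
The plan is to use Lemma~\ref{lem:simple} to reduce the inhomogeneous estimate to a homogeneous overlap problem, and then to rerun the construction of \S\ref{subsec_completion} under the additional constraint that every element of $S$ be divisible by $b$.

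For any $\gamma$ in the union of intervals stated, we have $\|b\gamma\| \leq \delta = bf(\max S)$, and since $f$ is non-increasing this gives $\|b\gamma\| \leq bf(q)$ for every $q \in S$. Lemma~\ref{lem:simple} (applied with its $\eps$ equal to $f(q)$) then yields $A_q^\gamma(f) \subseteq A_{bq}(2bf(q))$. Hence it suffices to exhibit a finite $S \subset \NN \cap (M,\infty)$ with $\sum_{q \in S} f(q) \in [1/2, 1]$ and with $\lambda\bigl(\bigcup_{q \in S} A_{bq}(2bf(q))\bigr) < \eps$.

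I would follow \S\ref{subsec_completion} with two modifications. First, when applying Lemma~\ref{lem_good_I}, I restrict the index set $\cI$ to those $i$ for which neither $p_{2i-1}$ nor $p_{2i}$ divides $b$. Only finitely many indices are thereby removed, so the conclusion of Lemma~\ref{lem_good_I} is unaffected, and we gain $\gcd(b, P) = 1$ where $P = P(\cI)$. Second, I replace the progression $x \equiv 1 \pmod{BP}$ from step~(4) by the progression $x \equiv b \pmod{bBP}$, with $B \in \NN$ coprime to $P$. Writing $x = b(1+BPk)$ gives both $b \mid x$ and $\gcd(x,P) = 1$, whence $\gcd(bx, P) = 1$. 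Setting $S = \{ bxy : x \in X,\ y \in Y(\cI)\}$ and tuning $x_{\max}$ as in step~(3), one secures $\sum_{q \in S} f(q) \in [1/2,1]$.

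To bound the measure of the union, I would apply Lemma~\ref{lem_overlap_bound} with $bx$ in place of $x$ and with the non-increasing auxiliary function $\tilde f(n) = 2bf(\lceil n/b\rceil)$, which satisfies $\tilde f(bxy) = 2bf(xy)$. Since $\gcd(bx, P) = 1$, the lemma produces
\[
\lambda\Bigl(\bigcup_{y \in Y} A_{bxy}\bigl(2bf(xy)\bigr)\Bigr) \leq 4b \sum_{y \in Y} g_\cI(y)\,f(xy),
\]
which agrees with the homogeneous estimate of \S\ref{subsec_completion} up to an overall factor of $2b$. Summing over $x \in X$ and running the arithmetic-progression-to-integral-average manipulations of that subsection yields a bound of order $b\delta_0$, where $\delta_0$ is the parameter from Lemma~\ref{lem_good_I}; choosing $\delta_0$ sufficiently small in terms of $\eps/b$ then delivers the required bound. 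The only point requiring genuine attention, beyond tracking the factor of $2b$, is the simultaneous satisfaction of the divisibility constraints $b\mid x$ and $\gcd(x,P)=1$, which is handled uniformly by the residue class $x \equiv b \pmod{bBP}$ once $\gcd(b,P) = 1$ has been arranged.
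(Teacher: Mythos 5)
Your reduction via Lemma~\ref{lem:simple} to bounding $\lambda\bigl(\bigcup_{q\in S} A_{bq}(2bf(q))\bigr)$ is exactly the paper's first move, but from there you take a genuinely different route. The paper keeps Proposition~\ref{counterexample_blocks} as a black box: it applies it to the function $2f$ with parameter $\eps/b$, passes from $\bigcup_q A_q(2f)$ to $\bigcup_q A_q(2bf)$ by the dilation Lemma~\ref{lem:dilation}, and then observes that $\bigcup_q A_{bq}(2bf(q))$ is the preimage of $\bigcup_q A_q(2bf(q))$ under the measure-preserving map $x\mapsto bx \pmod 1$ --- so no coprimality or divisibility conditions involving $b$ are ever needed. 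You instead reopen the construction of \S\ref{subsec_completion}, arrange $\gcd(b,P)=1$, and apply Lemma~\ref{lem_overlap_bound} to the sets $A_{(bx)y}$ with the rescaled non-increasing function $\tilde f(n)=2bf(\lceil n/b\rceil)$. This does work: the lemma only needs $\tilde f$ non-increasing and $\gcd(bx,P)=1$, the extra factor $2b$ is absorbed by taking the Lemma~\ref{lem_good_I} parameter of size $\eps/b$, and your claim that deleting the finitely many indices $i$ with $p_{2i-1}p_{2i}$ dividing $b$ does not disturb Lemma~\ref{lem_good_I} is correct (the lower bound $\#I_j\gg e^j/j$ survives the removal of $O_b(1)$ indices, though this requires reopening that proof too). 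The cost of your route is having to re-verify all of \S\ref{subsec_completion}; the paper's black-box argument avoids this entirely.

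Two bookkeeping slips, both fixable but worth flagging. First, the divisibility $b\mid x$ is never actually used: Lemma~\ref{lem_overlap_bound} needs only $\gcd(bx,P)=1$, which already follows from $\gcd(b,P)=1$ together with $x\equiv 1\pmod{BP}$, so the progression $x\equiv b\pmod{bBP}$ is an unnecessary complication. Second, the definition $S=\{bxy\}$ is inconsistent with the rest of your argument: if $q=bxy\in S$, then the set you must control after Lemma~\ref{lem:simple} is $A_{bq}=A_{b^2xy}$, and the mass condition concerns $f(bxy)$, whereas your displayed overlap bound controls $\bigcup_y A_{bxy}(2bf(xy))$, which corresponds to $S=\{xy\}$. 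You should take $S=\{xy: x\in X,\ y\in Y\}$ with $x\equiv 1\pmod{BP}$ and let the factor $b$ enter only through the inclusion $A_{xy}^\gamma(f)\subseteq A_{bxy}(2bf(xy))$. With those corrections the proof is sound.
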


The proof of Proposition~\ref{prop:inhomprop} relies on the following
observations. 

\begin{lem}
\label{lem:simple}
Let $\eps>0$, $\gamma\in\RR$, and $b\in\NN$. If $\norm{b\gamma} \leq b\eps$, then $A_q^\gamma(\eps) \subseteq A_{bq} (2 b\eps)$.
\end{lem}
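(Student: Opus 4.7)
The plan is to unpack the two distance conditions and combine them using the triangle inequality and the sub-linearity of $\|\cdot\|$ under integer scaling. Since $A_q^\gamma(\eps)$ is defined by $\|q\alpha-\gamma\| < \eps$ and $A_{bq}(2b\eps)$ by $\|bq\alpha\| < 2b\eps$, the natural bridge is to pass from $q\alpha - \gamma$ to $bq\alpha$ by multiplying by $b$ and then subtracting $b\gamma$, after which the hypothesis $\|b\gamma\|\le b\eps$ cleans up the residual shift.

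Concretely, I would start with an arbitrary $\alpha \in A_q^\gamma(\eps)$, so that $\|q\alpha - \gamma\| < \eps$. The first step is to note the elementary inequality $\|b x\| \le b\|x\|$ for $b \in \NN$, which holds because if $n$ is the nearest integer to $x$ then $bn$ is an integer witness for $bx$ (not necessarily the nearest, but enough to bound the distance). Applying this with $x = q\alpha - \gamma$ gives $\|bq\alpha - b\gamma\| \le b\|q\alpha - \gamma\| < b\eps$.

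The second step is the triangle inequality $\|u+v\|\le\|u\|+\|v\|$ for $\|\cdot\|$ on $\RR/\ZZ$, applied to $u = bq\alpha - b\gamma$ and $v = b\gamma$, yielding
\[
\|bq\alpha\| \le \|bq\alpha - b\gamma\| + \|b\gamma\| < b\eps + b\eps = 2b\eps,
\]
where the final bound uses the standing hypothesis $\|b\gamma\|\le b\eps$. This shows $\alpha \in A_{bq}(2b\eps)$, completing the inclusion.

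There is no real obstacle here: the lemma is essentially a one-line manipulation, and the only point requiring a moment of care is making explicit the two basic properties of the $\|\cdot\|$-norm (sub-multiplicativity under integer scaling and the triangle inequality), neither of which needs proof beyond a brief remark. The content of the lemma is conceptual rather than technical: it records the simple but crucial observation that an inhomogeneous $A_q^\gamma$-set with shift $\gamma$ close to a rational with denominator $b$ is trapped inside a homogeneous $A_{bq}$-set (at the cost of a factor-of-$2b$ loss in the radius), which is exactly what will let the subsequent arguments reduce inhomogeneous constructions to the homogeneous setting of Proposition~\ref{counterexample_blocks}.
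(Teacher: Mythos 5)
Your proof is correct and follows essentially the same route as the paper's: scale the inequality $\|q\alpha-\gamma\|<\eps$ by $b$ and then absorb the term $b\gamma$ via the triangle inequality using the hypothesis $\|b\gamma\|\le b\eps$. The paper phrases this with explicit integer witnesses $a$ and $j$ and inequalities for $|\alpha - (ba+j)/(bq)|$, whereas you work abstractly with the two standard properties of $\|\cdot\|$; the content is identical.
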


\begin{proof}
Let $\alpha \in A_q^\gamma(\eps)$, so that there exists $a\in \ZZ$ with
\begin{equation*}
\abs*{\alpha - \frac{a+\gamma}{q}} <\frac{\eps}{q}.
\end{equation*}
For each $j \in \bZ$, the triangle inequality gives
\begin{align*}
\abs*{\alpha - \frac{a+\gamma}{q}} 
&= \abs*{\alpha - \frac{ba+ b\gamma}{bq}}
= \abs*{\alpha - \frac{ba+ j +(b\gamma-j)}{bq}} \\ &\geq \abs*{\alpha - \frac{ba+ j}{bq}} -  \abs*{\frac{b\gamma-j}{bq}}.
\end{align*}
Combining yields
\begin{equation*}
\abs*{\alpha - \frac{ba+ j}{bq}}< \frac{\eps}{q} +  \abs*{\frac{b\gamma-j}{bq}} \qquad (j \in \bZ).
\end{equation*}
Meanwhile, since $\norm{b\gamma} \leq b\eps$, there is some integer $j$
  such that $\abs{b\gamma - j}\leq b\eps$, so
      \begin{equation*}
        \abs*{\alpha - \frac{ba+ j}{bq}}< \frac{2\eps}{q} = \frac{2b\eps}{bq}.
      \end{equation*}
      This shows that $\alpha \in A_{bq}(2b\eps)$, hence
      $A_q^\gamma(\eps) \subseteq A_{b q}(2 b\eps)$ as claimed.
    \end{proof}
    
\begin{lem}
\label{lem:dilation}
Let $I_1, \dots, I_k$ be intervals, and $b \geq 1 $. Then
\begin{equation*}
\lambda\parens*{\bigcup_{i=1}^k b\bullet I_i} \leq b    \lambda\parens*{\bigcup_{i=1}^k I_i},
\end{equation*}
where $b\bullet I_i$ denotes the concentric dilation of $I_i$ by the
factor $b$.
\end{lem}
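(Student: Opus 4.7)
The plan is to reduce the inequality to a per-component estimate on the union $U := \bigcup_{i=1}^k I_i$. Since each $I_i$ is a connected subset of $\RR$ and $U$ is a finite union of intervals, $U$ decomposes as a disjoint union of its connected components $J_1,\dots,J_m$, which are themselves intervals, and each $I_i$ lies inside exactly one component $J_\ell$. Partitioning the index set as $\mathcal{I}_\ell = \{i : I_i \subseteq J_\ell\}$, subadditivity of Lebesgue measure gives
\[
\lambda\biggl(\bigcup_{i=1}^k (b\bullet I_i)\biggr) \leq \sum_{\ell=1}^m \lambda\biggl(\bigcup_{i\in\mathcal{I}_\ell} (b\bullet I_i)\biggr),
\]
so it suffices to establish the per-component bound $\lambda\bigl(\bigcup_{i\in\mathcal{I}_\ell} (b\bullet I_i)\bigr) \leq b |J_\ell|$ for every $\ell$; summing over $\ell$ will then yield $b\lambda(U)$.

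For the per-component step, I would parametrise $J_\ell = [u,v]$ and each $I_i = [a_i - r_i, a_i + r_i]$ for $i \in \mathcal{I}_\ell$. The containment $I_i \subseteq J_\ell$ forces both $2r_i \leq |J_\ell|$ and $b\bullet I_i \subseteq [u-(b-1)r_i, v+(b-1)r_i]$. Taking the union over $i \in \mathcal{I}_\ell$ and setting $R := \max_{i\in\mathcal{I}_\ell} r_i$ therefore exhibits $\bigcup_{i\in\mathcal{I}_\ell}(b\bullet I_i)$ inside a single interval of length at most $|J_\ell| + 2(b-1)R$, and using $2R \leq |J_\ell|$ this is bounded by $b|J_\ell|$.

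I foresee no real obstacle, as the argument is entirely elementary. The one observation to keep in mind is that the factor of $b$ on the right-hand side is a sharp consequence of the fact that the largest $I_i$ lying inside a given component $J_\ell$ cannot have radius exceeding $|J_\ell|/2$; this is precisely what prevents the bound from degrading into something larger, such as $(2b-1)\lambda(U)$, which one would naively get by dilating each component of $U$ separately without grouping the $I_i$'s.
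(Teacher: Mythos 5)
Your proposal is correct and follows essentially the same route as the paper: decompose the union into its connected components $J_1,\dots,J_m$ and bound the dilated union component by component, summing $b\lvert J_\ell\rvert$ at the end. The only (cosmetic) difference is the per-component step, which the paper phrases as the containment $b\bullet I_i\subseteq b\bullet J_\ell$ (valid since $b\geq 1$), whereas you enclose $\bigcup_{i\in\mathcal{I}_\ell} b\bullet I_i$ in an explicit interval of length at most $\lvert J_\ell\rvert+2(b-1)R\leq b\lvert J_\ell\rvert$ using $2R\leq\lvert J_\ell\rvert$; the two observations are interchangeable.
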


\begin{proof}
The intervals $I_1, \dots, I_k$ may be assumed without loss of generality to be open. Write
\begin{equation*}
\bigcup_{i=1}^k I_i =     \bigcup_{j=1}^\ell J_j, 
\end{equation*}
where $J_1, \dots, J_\ell$ are pairwise disjoint open intervals. Notice that
\begin{equation*}
\bigcup_{i=1}^k b\bullet I_i \subset     \bigcup_{j=1}^\ell b\bullet J_j. 
\end{equation*}
It follows that
\begin{align*}
\lambda\parens*{\bigcup_{i=1}^k b\bullet I_i}
\leq \lambda\parens*{\bigcup_{j=1}^\ell b\bullet J_j} \leq b \sum_{j=1}^\ell \lambda(J_j) = b \lambda\parens*{\bigcup_{j=1}^\ell J_j} = b \lambda\parens*{\bigcup_{i=1}^k I_i}.
\end{align*}
\end{proof}

\begin{proof}[Proof of Proposition~\ref{prop:inhomprop}]
By an application of Proposition~\ref{counterexample_blocks}, we may find a finite set $S\subset\NN\cap(M,\infty)$ such that
\begin{equation*}
\sum_{q\in S} 2 f (q) \in [1,2]
\qquad \textrm{and} \qquad
\lambda \parens*{\bigcup_{q\in S} A_q(2 f)} < \frac{\eps}{b}. 
\end{equation*}
Thus, by Lemma~\ref{lem:dilation},
\begin{equation*}
\sum_{q\in S}  f (q) \in [1/2,1]
\qquad \textrm{and} \qquad
\lambda\parens*{\bigcup_{q\in S} A_q(2 b f)} < \eps.
\end{equation*}
Put $\delta = b f(q_{\max})$, where $q_{\max} = \max S$, and let
\begin{equation*}
\label{eq:gammaints}
\gamma \in \bigcup_{j\in\ZZ} \brackets*{\frac{j-\delta}{b},
\frac{j+\delta}{b}}.
\end{equation*}
Then 
\begin{equation}
\label{eq:gammaapprox}
\norm{b\gamma} \leq \delta = b f(q_{\max})\leq b f(q),
\end{equation}
for every $q\in S$ so,  by Lemma~\ref{lem:simple}, 
\begin{equation*}
A_q^\gamma(f(q)) \subseteq A_{b q}(2 b f (q)).
\end{equation*}
Note that $f(q)$ is a number here, not the name of a function, so $A_{bq}(2bf(q))$ does not mean $A_{bq}(2bf) = A_{bq}(2bf(bq))$.
  
Finally, since $x \mapsto bx \pmod{1}$ is a measure-preserving transformation on $[0,1)$,
\begin{equation*}
\lambda\parens*{\bigcup_{q\in S}A_q^\gamma(f)} \le    \lambda\parens*{\bigcup_{q\in S}A_{b q}(2bf(q))} = \lambda\parens*{\bigcup_{q\in S}A_{q}(2 b f)} < \eps.
\end{equation*}
This establishes~(\ref{eq:smallmass}) and finishes the proof of the proposition.
\end{proof}

We are ready to prove Theorem~\ref{thmB}. The idea is to apply Proposition~\ref{prop:inhomprop} repeatedly with an increasing sequence of values for $b$. We will choose this sequence $(b_k)_{k \ge 1}$ so that if $q \in \bN$ then $q \mid b_k$ for all sufficiently large $k$. 

\begin{proof}
[Proof of Theorem~\ref{thmB}]
For $k\geq 1$, let $b_k = (p_1 p_2 \cdots p_k)^{G_k}$, where $(G_k)_{k \ge 1}$ is an
sequence of positive integers to be chosen later. For each $k \geq 1$, Proposition~\ref{prop:inhomprop} furnishes $\delta_k>0$ and a finite set
\begin{equation*}
S_k\subset \NN\cap (\max S_{k-1}, \infty)
\end{equation*}
such that
\begin{equation}
\label{eq:onceagain}
\sum_{q\in S_k} f(q) \in [1/2,1]
\qquad\textrm{and}\qquad
\lambda\parens*{\bigcup_{q\in S_k} A_q^\gamma(f)} < 2^{-k}
\end{equation}
for every
$\gamma \in \bigcup_{j\in\ZZ} \brackets*{\frac{j-\delta_k}{b_k},
\frac{j+\delta_k}{b_k}}$. Choose $G_k>G_{k-1}$ large enough that for each for each $k$, each interval from
$\bigcup_{j\in\ZZ} \brackets*{\frac{j-\delta_{k-1}}{b_{k-1}},
\frac{j+\delta_{k-1}}{b_{k-1}}}$ contains at least two intervals from
$\bigcup_{j\in\ZZ} \brackets*{\frac{j-\delta_{k}}{b_k},
\frac{j+\delta_{k}}{b_k}}$. Let $S = \bigcup_{k\geq 1} S_k$, and note from (\ref{eq:onceagain}) that
\begin{equation*}
\sum_{q\in S} f(q) = \sum_{k=1}^\infty \sum_{q\in S_k}f(q) = \infty.
\end{equation*}

Put
\begin{equation*}
\Gamma = %\bigcup_{\ell\geq 1}\bigcap_{k\geq \ell}\bigcup_{j\in\ZZ} \brackets*{\frac{j-\delta_{k}}{b_k}, \frac{j+\delta_{k}}{b_k}} =
\liminf_{k\to\infty} \bigcup_{j\in\ZZ} \brackets*{\frac{j-\delta_{k}}{b_k},
\frac{j+\delta_{k}}{b_k}}, 
\end{equation*}
%Then $\Gamma$ is a union of infinitely many uncountable Cantor-type sets.  
and notice that $\Gamma$ is uncountable and that $\QQ \subseteq \Gamma$. After all, if $a \in \bZ$ and $q \in \bN$ then, for $k$ sufficiently large,
\begin{equation*}
\frac{p}{q}\in   \bigcup_{j\in\ZZ} \brackets*{\frac{j-\delta_{k}}{b_k},
\frac{j+\delta_{k}}{b_k}}. 
\end{equation*}

Let $\gamma \in \Gamma$.  By~(\ref{eq:onceagain}) and the fact that
$\sum 2^{-k}<\infty$, we have
\begin{equation*}
\lambda\parens*{\limsup_{k\to\infty} \bigcup_{q\in S_k}A_q^\gamma} = 0,
\end{equation*}
whence $\lambda(W(\bone_S f,\gamma))=0$.
\end{proof}

The condition~\eqref{eq:gammaapprox} quantifies the diophantine approximation rate of
$\gamma$ in terms of $f$. Informally, Theorem~\ref{thmB} may be read
as saying that for every $f$ there is a set $S \subseteq \NN$, and a set
$\Gamma \subseteq \RR$ consisting of real numbers $\gamma$ which are
``wildly Liouville in terms of $f$'', such that every $\gamma \in \Gamma$
is a counterexample for the function $\bone_S f$.

In the next theorem we specify a notion of ``sufficiently Liouville in terms of $f$'', and show that for \emph{every} $\gamma$ meeting this
condition there exists $S=S_\gamma \subseteq \NN$ such that $\gamma$ is
a counterexample for $\bone_S f$.

\begin{thm}
\label{thm:generalhowliouville}
Let $f:\NN \to [0,\infty)$ be a non-increasing function satisfying \eqref{fdiverge}. Then there exists an increasing, unbounded function $L: \bN \to [0,\infty)$ associated to $f$ such that if
\begin{equation}
\label{eq:generalhowliouville}
\liminf_{q \to \infty} L (q) \lVert q \gamma \rVert < \infty,
\end{equation}
then there exists $S = S_\gamma \subseteq \N$ such that we have \eqref{eq:thmB}.
\end{thm}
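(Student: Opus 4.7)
The plan is to iterate Proposition~\ref{prop:inhomprop} over every positive integer $b$, using $b$ itself as the ``denominator'' parameter, and to build $L$ from the parameters of the resulting family of sets $T(b)$.

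\textbf{Construction of $L$.} Recursively on $b=1,2,3,\dots$, having chosen $T(1),\dots,T(b-1)$, pick a threshold $M_b>\max T(b-1)$ large enough that $b\,f(M_b)$ is strictly smaller than $(b-1)f(M_{b-1})$ and tends to $0$ (possible because $f\to 0$). Apply Proposition~\ref{prop:inhomprop} with parameters $\eps=2^{-b}$, $M=M_b$, and $b$, obtaining a finite set $T(b)\subset\NN\cap(M_b,\infty)$ with
\[
\sum_{q\in T(b)}f(q)\in[1/2,1],\qquad \lambda\parens*{\bigcup_{q\in T(b)}A_q^\gamma(f)}<2^{-b}
\]
for every $\gamma\in\bigcup_{j\in\ZZ}\brackets*{\tfrac{j-\delta_b}{b},\tfrac{j+\delta_b}{b}}$, where $\delta_b=b\,f(\max T(b))$. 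Since $\max T(b)\geq M_b$ and $f$ is non-increasing, the numbers $\delta_b$ are strictly decreasing to $0$. Define $L(b)=1/\delta_b$; this is strictly increasing and unbounded.

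\textbf{Selecting $S_\gamma$.} Suppose $\liminf_{q\to\infty}L(q)\norm{q\gamma}=:C<\infty$. Extract an infinite sequence $q_1<q_2<\cdots$ with $L(q_k)\norm{q_k\gamma}\leq C+1$, i.e.\ $\norm{q_k\gamma}\leq (C+1)\delta_{q_k}$. Set $S_\gamma:=\bigsqcup_{k\geq 1}T(q_k)$; this is a disjoint union because the blocks were placed on expanding intervals. The divergence condition is immediate:
\[
\sum_{q\in S_\gamma}f(q)=\sum_{k\geq 1}\sum_{q\in T(q_k)}f(q)\geq\sum_{k\geq 1}\tfrac{1}{2}=\infty.
\]

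\textbf{Measure bound.} The key point is that the proof of Proposition~\ref{prop:inhomprop} only uses $\norm{b\gamma}\leq bf(\max S)$ through Lemma~\ref{lem:simple}. Exactly the same triangle inequality used to prove Lemma~\ref{lem:simple} yields the following straightforward generalisation: if $\norm{b\gamma}\leq C'b\eps$, then $A_q^\gamma(\eps)\subseteq A_{bq}((1+C')b\eps)$. Applying this with $b=q_k$, $\eps=f(q)$ for each $q\in T(q_k)$, and $C'=C+1$, then following the measure-preserving map and Lemma~\ref{lem:dilation} steps from the proof of Proposition~\ref{prop:inhomprop} verbatim, gives
\[
\lambda\parens*{\bigcup_{q\in T(q_k)}A_q^\gamma(f)}\leq\tfrac{C+2}{2}\cdot 2^{-q_k}.
\]
The right-hand side is summable over $k$, so the first Borel--Cantelli lemma yields $\lambda\parens*{\limsup_{k\to\infty}\bigcup_{q\in T(q_k)}A_q^\gamma(f)}=0$. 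Since the $T(q_k)$ are finite and disjoint, $W(\bone_{S_\gamma}f,\gamma)$ is contained in this limsup, establishing $\lambda(W(\bone_{S_\gamma}f,\gamma))=0$.

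\textbf{Main obstacle.} The delicate step is arranging the construction so that $L$ has the required monotonicity and is defined independently of $\gamma$: the sequence $M_b$ must grow quickly enough to force $\delta_b\downarrow 0$, yet each $T(b)$ must be chosen from $(M_b,\infty)$ while still belonging to a predetermined parameter $\eps=2^{-b}$. The only other non-routine ingredient is the observation that the constant $C$ arising from the relaxed diophantine condition on $\gamma$ enters the measure bound only linearly, so that the Borel--Cantelli sum converges for \emph{every} finite value of $C$ using the single function $L$.
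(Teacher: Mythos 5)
Your proposal is correct, and it reaches the theorem by a genuinely different organisation of the same ingredients. The paper first extracts an auxiliary function $H$ with the property that Proposition~\ref{counterexample_blocks} can always be realised with $\max S\le H(1/\eps)$, sets $L(q)=1/f(H(q^2))$, and only \emph{after} fixing $\gamma$ and a sequence $(q_n)$ with $L(q_n)\lVert q_n\gamma\rVert\le q_n$ does it build the blocks, applying Proposition~\ref{prop:inhomprop} with $b=q_n$ and $\eps=q_n^{-1}$; the liminf constant is absorbed into the factor $q_n$ of slack, so Lemma~\ref{lem:simple} is used exactly as stated. You instead pre-build one block $T(b)$ for \emph{every} $b\in\NN$, read off $L(b)=1/\delta_b$ from the actual output of the construction, and for a given $\gamma$ select the sub-family $T(q_k)$ indexed by the good denominators; the price is that you must absorb the constant $C$ into the measure bound, which you do correctly via the $C'$-perturbed version of Lemma~\ref{lem:simple} together with Lemma~\ref{lem:dilation} (the resulting factor $(C+2)/2$ is harmless because it is uniform in $k$). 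Your route makes the $\gamma$-independence of $L$ manifest by fiat, whereas the paper's detour through $H$ is what renders $L$ explicitly computable for concrete $f$ and is reused verbatim in Lemma~\ref{lem:maxS} and Theorem~\ref{thm:howliouville}. Two small repairs to your write-up: (i) to force $\delta_b<\delta_{b-1}$ you should demand $bf(M_b)<\delta_{b-1}=(b-1)f(\max T(b-1))$ rather than $bf(M_b)<(b-1)f(M_{b-1})$, since the latter quantity only bounds $\delta_{b-1}$ from above; alternatively, monotonicity of $L$ is free, as one may replace $L$ by its running maximum, which only strengthens hypothesis~\eqref{eq:generalhowliouville}; (ii) you implicitly use that $f>0$ everywhere (so that $\delta_{b-1}>0$ and $M_b$ can be chosen), which holds because a non-increasing $f$ vanishing somewhere would have $\sum f(q)<\infty$.
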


\begin{proof}
Let $H: (0,\infty) \to \NN$ be an increasing, unbounded function such that in Proposition~\ref{counterexample_blocks} for $M = 1$ and any $\varepsilon > 0$, one can find such a set $S$ with $\max S\leq H(1/\eps)$. We choose
\begin{equation} \label{LH}
L(q) = \frac{1}{f(H(q^2))}.
\end{equation}

Suppose $\gamma\in\RR$ satisfies~(\ref{eq:generalhowliouville}). Then there
exists an increasing sequence $(q_n)_{n \ge 1}$ of positive integers such that
\begin{equation}
\label{eq:epsilonmod}
L(q_n) \norm{q_n\gamma} \leq q_n \qquad (n\geq 1)
\end{equation}
and $\sum q_n^{-1} < \infty$. For each $n \in \bN$, Proposition~\ref{prop:inhomprop} provides a set
$S_n \subset \NN \cap (1, \infty)$ with
\begin{equation}
\label{eq:measureHf}
\sum_{q\in S_n} f(q) \in [1/2,1]
\quad\textrm{and}\quad\lambda\parens*{\bigcup_{q\in S_n}A_q^\gamma(f)} < \frac{1}{q_n}
\end{equation}
holding for $\gamma$ as long as
\begin{equation}
\label{eq:whatweneed}
\norm{q_n \gamma} \leq q_n f(\max S_n).
\end{equation}

Next, we find an upper bound for $\max S_n$, by working through the proof
of Proposition~\ref{prop:inhomprop}. Applying Proposition \ref{counterexample_blocks} with $\varepsilon = q_n^{-2}$ and $M = 1$, we get by the very definition of $H$ that $\max S_n \le H(q_n^2)$. In order for the proof of Proposition~\ref{prop:inhomprop} to carry through with parameters $\varepsilon = q_n^{-1}$ and $b = q_n$, we need to ensure that condition \eqref{eq:gammaapprox} is met, so it suffices to have
%which follows precisely from \eqref{eq:epsilonmod}.\\
%  From the proof of Proposition~\ref{prop:inhomprop} it follows that
%  we may further take $\max S_n \leq H(q_n\tau(q_n))$.  Hence
\begin{equation*}
\norm{q_n\gamma} \leq q_n f(H(q_n^2)),
\end{equation*}
%suffices for~(\ref{eq:whatweneed}),
but this is given
by~(\ref{eq:epsilonmod}). The upshot is that (\ref{eq:measureHf}) holds for all $n \in \bN$. 
  
By choosing $(q_n)_{n \ge 1}$ to grow sufficiently quickly that 
\[
q_n^{-1} < \lambda(A_{\max S_{n-1}}^\gamma(f)) \qquad (n \ge 2),
\]
we ensure that $\max S_{n-1} < \min S_n$. Putting $S = \bigcup S_n$, we have $\sum_{q \in S} f(q) = \infty$ and, since
$\sum q_n^{-1} < \infty$, we have $\lambda(W(\bone_S f, \gamma)) = 0$.
\end{proof}

In the special case of 
functions of the form
\eqref{special}, we are able to specify a function $H$ as above, and then use this to prove Theorem~\ref{thm:howliouville}.

\begin{lem}
\label{lem:maxS}
In Proposition~\ref{counterexample_blocks}, for $f_k$, one may find $S$ with
\begin{equation*}
\max S \leq \exp^{(k+3)} \parens*{\eps^{-3000}},
\end{equation*}
as long as $\eps >0$ is small enough. 
\end{lem}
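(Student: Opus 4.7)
The plan is to run the construction of $S = X \cdot Y$ from the proof of Proposition~\ref{counterexample_blocks} with $f = f_k$, tracking each ingredient as a function of $\eps$. The analysis of $g_{\cI_K}$ leading to \eqref{eq_inclusion} shows that taking $K = \lceil (12/\eps)^{2000}\rceil$ suffices to secure $\delta = \eps/12$ in \eqref{eq_good_I}, so all remaining quantities will be controlled by $K$, and hence by $\eps$.

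First I would use the prime number theorem to bound $y_{\max} \leq \prod_{p \leq e^{K+1}} p \leq \exp(\theta(e^{K+1})) \leq \exp^{(2)}(K + O(1))$, whence $P \leq y_{\max}^2$. The gap condition $p_{2i}-p_{2i-1}\leq 40j$ imposed in the definition of $I_j$ forces $y_{\max}/y_{\min} \to 1$ as $K\to\infty$. Combined with the choice $x'_{\min} \asymp y_{\max}^2$ (which, for $f = f_k$, is more than enough to keep all $o_Y(1)$ error terms of the form $y_{\max}f_k(x'_{\min}y_{\min})$ negligible), this lets us take $B$ in step $2$ of the construction to be of bounded size and coprime to $P$. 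Consequently $x_{\min} \leq 2BP \leq \exp^{(2)}(K + O(1))$.

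The heart of the argument is bounding $x_{\max}$. Since $f_k$ has antiderivative $\log^{(k+1)}$, the integral approximation carried out in the proof of Proposition~\ref{counterexample_blocks} turns the defining condition~\eqref{const_mass} into
\begin{equation*}
\log^{(k+1)}(x_{\max}) - \log^{(k+1)}(x_{\min}) \asymp \frac{BP}{\sum_{y\in Y} 1/y} = \frac{BP^{2}}{\prod_{i\in\cI}(p_{2i}+p_{2i-1})},
\end{equation*}
where the equality uses $\sum_{y\in Y} 1/y = \prod_{i\in\cI}\bigl(p_{2i}^{-1}+p_{2i-1}^{-1}\bigr)$. The estimate $\prod_{i\in\cI}(p_{2i}+p_{2i-1}) \geq 2^{\#\cI}\, y_{\min}$ combined with $y_{\min} \asymp y_{\max}$ bounds the right-hand side by $O(y_{\max}^{3}) \leq \exp^{(2)}(K + O(1))$. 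Applying $\exp^{(k+1)}$ to both sides yields $x_{\max} \leq \exp^{(k+1)}\bigl(\exp^{(2)}(K + O(1))\bigr) = \exp^{(k+3)}(K + O(1))$, and $\max S \leq x_{\max}\, y_{\max}$ is of the same order. For $\eps$ small enough that $K + O(1) \leq \eps^{-3000}$, this is the claimed bound.

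The main obstacle I anticipate is the bookkeeping of the various additive and multiplicative constants as they propagate through iterated exponentials: verifying that $B$ can indeed be taken bounded (via the gap condition), and that the $o_Y(1)$ error terms from the proof of Proposition~\ref{counterexample_blocks} remain so for the specific function $f_k$. Once these are checked, the very generous slack between $K \asymp \eps^{-2000}$ and the target exponent $\eps^{-3000}$ at the double-exponential scale absorbs all constants without difficulty.
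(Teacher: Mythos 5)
Your proposal follows essentially the same route as the paper: choose $K\asymp\delta^{-2000}$ with $\delta=\eps/12$, bound $P$ by a primorial estimate to get $P\le\exp^{(2)}(K+O(1))$, and then extract $\log^{(k+1)}x_{\max}\le\exp^{(2)}(K+O(1))$ from the condition \eqref{const_mass} via the integral comparison, so that $\max S=x_{\max}y_{\max}\le\exp^{(k+3)}(\eps^{-3000})$. Two of your intermediate claims are false, though harmlessly so. First, the gap condition does \emph{not} force $y_{\max}/y_{\min}\to 1$: one has $\log(y_{\max}/y_{\min})=\sum_{i\in\cI}\log(p_{2i}/p_{2i-1})\le\sum_j \#I_j\cdot 40j e^{-j}\ll K$, so $y_{\max}/y_{\min}$ can be as large as $e^{O(K)}$. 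Second, since $P=y_{\min}y_{\max}$ exactly, taking $x'_{\min}\asymp y_{\max}^2$ would force $B\gtrsim y_{\max}/y_{\min}$, which need not be bounded; the paper instead takes $x'_{\min}=4P/\delta$ and $B=P-1$. Neither issue affects the conclusion, because the quantity you need to control, $BP^2/\prod_{i\in\cI}(p_{2i}+p_{2i-1})$, is still $\le\exp^{(2)}(K+O(1))$ even with the correct bound $y_{\max}/y_{\min}\le e^{O(K)}$, and the slack between $K\asymp\eps^{-2000}$ and the exponent $\eps^{-3000}$ absorbs everything, exactly as you anticipate.
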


\begin{proof}
In the proof of Lemma~\ref{lem_good_I},  for $\delta > 0$ sufficiently small, we take $\cI = \cI_K$, and here we can choose $K > \del^{-2000}$ minimally.
Denoting \eqref{denote},  a standard estimate for primorials gives
\begin{align*}
P \leq \prod_{p \leq e^{K+1}}p = e^{(1+o(1))e^{K+1}} \leq \exp^{(2)} \parens*{\del^{-2001}}.
\end{align*} 
The proof of Proposition~\ref{counterexample_blocks} concludes with selecting $\delta = \eps/12$. Taking this linear relationship into
account, we see that we may find $Y$ such that
\begin{equation*}
\max Y \leq P \leq \exp^{(2)} \parens*{\eps^{-2025}},
\end{equation*}
as long as $\eps$ is sufficiently small.

We now revisit Steps~\eqref{step1}, \eqref{step2}, and \eqref{step3} of the construction of $S$ in the proof of Proposition~\ref{counterexample_blocks}. In Step~\eqref{step1}, we choose $x_{\min}'$ large enough that $f(x_{\min}'y_{\min})$ is small, even after multiplying by a
parameter depending on $Y$. Noting that $P \ge y_{\max} \ge |Y|$, the proof reveals that
\begin{equation*}
2P f(x_{\min}'y_{\min}) \leq \del/2
\end{equation*}
suffices. Since we have $f=f_k$, we may choose $x_{\min}' = 4P/\del$.
    
In Step~\eqref{step2}, we choose $B \in \bN$ with $(B,P)=1$, such that 
\[
x_{\min} := BP + 1 
\ge x'_{\min}.
\]
With $f=f_k$ and $x_{\min}'$ as
above, we can make the specific choice $B = P - 1$.

In Step~\eqref{step3}, we let $x_{\max}$ be the smallest integer such that
\begin{equation*}
\sum_{y\in Y}\sum_{\substack{x\in [x_{\min}, x_{\max}] \\ x \equiv 1 \pmod{BP}}} f_k(xy) \in [1,2]. 
\end{equation*}
We compute that
\begin{align*}
\sum_{y\in Y} \sum_{\substack{x \in [x_{\min}, x_{\max}] \\ x \equiv 1 \pmod{BP}}} f_k(xy)
&= \sum_{y\in Y}\sum_{\substack{x\in [x_{\min}, x_{\max}] \\ 
x \equiv 1 \pmod{BP}}} 
\frac{1}{xy\log(xy)\cdots\log^{(k)} (xy)} \\
&\geq \frac{1}{2^k}  \parens*{\sum_y\frac{1}{y}}
\sum_{\substack{x\in [x_{\min}, x_{\max}] \\ x \equiv 1 \pmod{BP}}} \frac{1}{x\log(x) \cdots \log^{(k)} (x)} \\
&\geq \frac{1}{4^kBP} \parens*{\sum_y\frac{1}{y}} \parens*{\log^{(k+1)}x_{\max} - \log^{(k+1)}x_{\min}} \\
&\geq \frac{1}{4^k P^3} \parens*{\log^{(k+1)}x_{\max} - \log^{(k+1)}(P^2)},
\end{align*}
since $\sum_y y^{-1} \geq 1/P$. We therefore have
\begin{equation*}
\frac{1}{4^k P^3} \parens*{\log^{(k+1)}x_{\max} - \log^{(k+1)}(P^2)} \leq 2,
\end{equation*}
hence
\begin{align*}
\log^{(k+1)}x_{\max} \leq 4^{k+1} P^3 + \log^{(k+1)}(P^2) \leq \exp^{(2)} \parens*{\eps^{-2500}},
\end{align*}
for $\eps$ sufficiently small, and finally
\[
S_{\max} = x_{\max} y_{\max} \le P x_{\max} \le \exp^{(k+3)}(\eps^{-3000}).
\]
\end{proof}

\begin{proof}[Proof of Theorem~\ref{thm:howliouville}]
We deploy the notation \eqref{special},  and let $H: (0,\infty) \to \bN$ be an increasing, unbounded function such that in Proposition~\ref{counterexample_blocks} we may find $S$ satisfying $\max S \leq H(1/\eps)$. By Lemma~\ref{lem:maxS}, there is such a function $H$ satisfying
\begin{equation*}
H(x) \leq \exp^{(k+3)}(x^{3000})
\end{equation*}
for all sufficiently large $x$.

In view of Theorem~\ref{thm:generalhowliouville} and the choice \eqref{LH} from its proof, for every
$\gamma\in\RR$ such that
\begin{equation*}
\liminf_{q \to \infty} \frac{\lVert q \gamma \rVert }{f_k(H(q^2))}  < \infty,
\end{equation*}
there exists $S = S_\gamma \subseteq \N$ such that
\begin{equation*}
\sum_{q \in S}
f_k(q) = \infty, \qquad \lambda(W(\bone_S f_k,\gamma)) 
= 0.
\end{equation*}
But
\begin{equation*}
\frac{1}{f_k(H(q^2))}  \leq    \frac{1}{f_k(\exp^{(k+3)}(q^{6000}))}  < \exp^{(k+3)} (q^{7000})
\end{equation*}
for all sufficiently large $q$, therefore
\begin{equation*}
\liminf_{q\to\infty} \norm{q\gamma}\exp^{(k+3)}(q^{7000}) < \infty
\end{equation*}
suffices and the theorem is proved.
\end{proof}

\end{document}